\documentclass{elsart}

\usepackage{amsfonts}
\usepackage{makeidx}
\usepackage{graphicx}
\usepackage[english,activeacute]{babel}
\usepackage[latin1]{inputenc}
\usepackage{amsmath,amssymb,latexsym,comment}
\usepackage[numbers,sort&compress]{natbib}
\usepackage{hyperref}
\usepackage{fancybox}

\bibpunct{[}{]}{,}{n}{}{;}
\newtheorem{theorem}{Theorem}[section]

\newtheorem{corollary}{Corollary}[section]

\newtheorem{lemma}{Lemma}[section]

\newenvironment{proof}[1][Proof]{\noindent\textbf{#1.} }{\ \rule{0.5em}{0.5em}}
\setlength{\paperheight}{11in}

\newcommand{\fracd}[2]{\displaystyle{\frac{{\displaystyle{#1}}}{{\displaystyle{#2}}}}}

\hyphenation{Ap-pli-ca-tions co-in-ci-des dif-fer-en-ce cha-ra-cte-ri-zed car-ries cha-ra-cte-ri-za-tion pro-per-ties ne-ces-sa-ry or-tho-go-nal or-tho-go-na-li-ty cor-res-pon-ding re-cur-ren-ce se-cond Se-con-da-ry dio-phan-ti-ne ap-pro-xi-mants trans-cen-den-ce me-thods fol-low-ing ap-pro-xi-ma-tions ins-pi-red ap-pro-xi-ma-tion Poch-ham-mer}
\journal{J. }

\begin{document}

\begin{frontmatter}



\title{A single parameter Hermite-Pad\'e series representations for Ap\'ery's constant}


\author{Anier Soria-Lorente$^{1}$ and Stefan Berres$^{2}$}

\address{$^1$ Department of Basic Sciences, Granma University\\
	Km 17.5 de la carretera de Bayamo-Manzanillo, Bayamo, Cuba\\
	asorial@udg.co.cu\\[2pt] $^2$ 
	Departamento de Ciencias Matem\'{a}ticas y F\'{\i}sicas,
	Facultad de Ingenier\'{i}a,  Universidad Cat\'olica de Temuco, Temuco, Chile \\
	sberres@uct.cl}

\begin{abstract}
Inspired by the results of Rhin and Viola (2001), the purpose of this work is to elaborate on a series representation for $\zeta \left( 3\right)$ which only depends on one single integer parameter. This is accomplished by deducing a Hermite-Pad\'e approximation problem 
using ideas of Sorokin (1998). As a consequence we get a new recurrence relation for the approximation of $\zeta(3)$  as well as a corresponding new continued fraction expansion for $\zeta(3)$, which do no reproduce Ap\'ery's phenomenon, i.e., though the approaches are different, they lead to the same sequence of diophantine approximations to $\zeta \left( 3\right) $. Finally, the convergence rates of several series representations of $\zeta(3)$ are compared.
\end{abstract}

\begin{keyword}
Riemann zeta function, Ap\'{e}ry's theorem, Hermite-Pad\'{e} approximation problem, recurrence relation, continued fraction expansion, series representation.

\MSC Primary 11B37, 30B70, 14G10, 11J72,
11M06; Secondary 37B20, 11A55, 11J70, 11Y55, 11Y65

\end{keyword}

\end{frontmatter}



\section{Introduction}
The study of the arithmetical properties of the Riemann zeta function at
integer arguments%
\begin{equation*}
\zeta \left( k\right) :=\sum_{n\geq 1}\fracd{1}{n^{k}}=\fracd{\left( -1\right)
	^{k-1}}{\left( k-1\right) !}\int_{0}^{1}\fracd{\log ^{k-1}x}{1-x}\, dx,\quad
k=1,2,\ldots ,
\end{equation*}%
as well as the results related to its series representations, has
fascinated quite a number of mathematicians since the first half of the XVII
century \cite{Soria1,Soria2}, both for its theoretical implications and
practical applications \cite{Kellerhals,Webster}. Indeed, everything began
when in 1644 the Italian mathematician Pietro Mengoli proposed the famous
Basel problem in mathematical analysis, which also has relevance to number theory.
Nine decades later, this problem was solved by Leonhard Euler. In his famous
book on Differential Calculus of 1755 he gave the general case \cite%
{Balanzario,Laga}%
\begin{equation*}
\zeta \left( 2k\right) =\left( -1\right) ^{k-1}\fracd{\left( 2\pi \right)
	^{2k}B_{2k}}{2\left( 2k\right) !},\quad k=1,2,\ldots ,
\end{equation*}%
which is Euler's celebrated formula, where $B_{2k}$ are the so-called
Bernoulli numbers \cite{Abramowitz,Conway}, with $B_{2k}\in \mathbb{Q}$ for
all $k\in \mathbb{N}$. The generalization of the so-called Basel problem by
Euler was a very important step in number theory. Later on 
Euler proposed the following conjecture for the odd case,
\begin{equation*}
\zeta \left( 2k+1\right) =\fracd{p_{k}}{q_{k}}\pi ^{2k+1},
\end{equation*}
where $p_{k}$ and $q_{k}$ are integer numbers. However, Euler's efforts to
validate his conjecture did not work out \cite{Soria1}, and meanwhile the
conjecture itself has been refuted \cite{Takaaki}.

Regardless of Euler's frustrated attempts, he was able to derive the following series
representation
\begin{equation}
\zeta \left( 3\right) =\lim_{n\rightarrow \infty }\zeta _{n}^{E}\left(
3\right) ,  \label{ESR1}
\end{equation}%
where%
\begin{equation}
\zeta _{n}^{E}\left( 3\right) =-\fracd{4\pi ^{2}}{7}\sum_{k=0}^{n}\fracd{\zeta
	\left( 2k\right) }{\left( 2k+1\right) \left( 2k+2\right) 2^{2k}} . 
\label{ESR}
\end{equation}%
This representation has inspired a large number of mathematicians and has been recently discovered by several authors in many different ways \cite{ChengS,Srivastava1,Srivastava2}.

After these first investigations by Euler, nothing was known on the
arithmetical nature of the Riemann zeta function for odd arguments, until,
on a thursday afternoon in June 1978, at 2 pm, at the Journ\'es Arithmetiques
held at Marseille-Luminy, Roger Ap\'ery surprised the mathematical community
with a talk about the irrationality of $\zeta \left( 3\right) $, see for
instance \cite{apery,Cohen,Soria1,van-der-poorten}. In this talk he claimed
to have proofs that both $\zeta \left( 2\right) $ and $\zeta \left( 3\right) 
$ were irrational.

The rational approximants of Ap\'ery $p_{n}/q_{n}$, which are also named 
Ap\'ery's diophantine approximations, 
approach $\zeta (3)$ as $n$ increases, i.e. converge for sufficiently large~$%
n$ as%
\begin{equation*}
\lim_{n\rightarrow \infty}\Bigl|\zeta (3)-\displaystyle{\fracd{{\displaystyle{p_{n}}}}{{\displaystyle{\
				q_{n}}}}}\Bigr|=0.
\end{equation*}%
One of the most crucial ingredients in Ap\'ery's proof is the existence of the
recurrence relation \cite{Elsner,Soria1,van-der-poorten}%
\begin{equation}
(n+2)^{3}y_{n+2}-(2n+3)(17n^{2}+51n+39)y_{n+1}+\left( n+1\right)
^{3}y_{n}=0,\quad n\geq 0,  \label{Apery-equat}
\end{equation}%
which is satisfied simultaneously by both the numerators $p_{n}$ and
denominators $q_{n}$ of the diophantine approximations $p_{n}/q_{n}$ to $%
\zeta \left( 3\right) $ with the respective initial condition%
\begin{equation*}
p_{0}=0,\quad p_{1}=6,\quad q_{0}=1,\quad q_{1}=5.
\end{equation*}%
The rational approximants $p_{n}$ and $q_{n}$ are also given by the explicit
representation of the sequences in question \cite%
{apery,Cohen,van-der-poorten}%
\begin{equation}
q_{n}:=\sum_{0\leq k\leq n}\binom{n+k}{k}^{2}\binom{n}{k}^{2}\quad \mbox{and}%
\quad p_{n}:=\sum_{0\leq k\leq n}\binom{n+k}{k}^{2}\binom{n}{k}^{2}\gamma
_{n,k},  \label{RA_Ap}
\end{equation}%
where%
\begin{equation*}
\gamma _{n,k}=\sum_{1\leq j\leq n}\fracd{1}{j^{3}}+\sum_{1\leq j\leq k}\fracd{%
	(-1)^{j-1}}{2j^{3}}\binom{n+j}{j}^{-1}\binom{n}{j}^{-1}.
\end{equation*}%
Observe that, from $\left( \text{\ref{Apery-equat}}\right) $ we can deduce
that%
\begin{equation}
\det 
\begin{pmatrix}
p_{n} & q_{n} \\ 
p_{n-1} & q_{n-1}%
\end{pmatrix}%
=\fracd{6}{n^{3}},  \label{detAp}
\end{equation}%
see \cite{Cohen,van-der-poorten} for more details.

In order to reformulate the recurrence relations 
in terms of a continued fraction representation
let us recall its definition and a basic lemma. 
We say that a number $\alpha $ can be written as an infinite
irregular continued fraction expansion, 
if it admits the following representation 
\begin{equation*}
\alpha =a_{0}+\fracd{b_{1}\mid }{\mid a_{1}}+\fracd{b_{2}\mid }{\mid a_{2}}%
+\cdots +\fracd{b_{n}\mid }{\mid a_{n}}+\cdots =a_{0}+\displaystyle\fracd{b_{1}%
}{a_{1}+\displaystyle\fracd{b_{2}}{\displaystyle a_{2}+\displaystyle\fracd{%
			b_{3}}{\displaystyle a_{3}+_{\ddots _{\displaystyle a_{n-1}+\fracd{b_{n}}{%
						a_{n}+_{\ddots}}}}}}}
\end{equation*}

\begin{lemma} \label{lem:jones}
	\cite[p.~31]{Jones} Let $\left( p_{n}\right) _{n\geq -1}$ and $\left(
	q_{n}\right) _{n\geq -1}$ be two sequences of numbers such that $q_{-1}=0$, $%
	p_{-1}=q_{0}=1$ and $p_{n}q_{n-1}-p_{n-1}q_{n}\neq 0$ for $n=0,1,2,\ldots $.
	Then there exists a unique irregular continued fraction 
	\begin{equation}
	a_{0}+\fracd{b_{1}\mid }{\mid a_{1}}+\fracd{b_{2}\mid }{\mid a_{2}}+\fracd{%
		b_{3}\mid }{\mid a_{3}}+\cdots +\fracd{b_{n}\mid }{\mid a_{n}}+\cdots ,
	\label{ICF}
	\end{equation}%
	whose $n$-th numerator is $p_{n}$ and $n$-th denominator is $q_{n}$, for
	each $n\geq 0$. More precisely,
	\begin{equation*}
	a_{0}=p_{0},\quad a_{1}=q_{1},\quad b_{1}=p_{1}-p_{0}q_{1},
	\end{equation*}%
	\begin{equation*}
	a_{n}=\fracd{p_{n}q_{n-2}-p_{n-2}q_{n}}{p_{n-1}q_{n-2}-p_{n-2}q_{n-1}},\quad
	b_{n}=\fracd{p_{n-1}q_{n}-p_{n}q_{n-1}}{p_{n-1}q_{n-2}-p_{n-2}q_{n-1}},\quad
	n=0,1,2,\ldots
	\end{equation*}%
	\label{BorweinT1}
\end{lemma}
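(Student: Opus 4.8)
The plan is to invoke the classical three-term recurrence satisfied by the convergents of an irregular continued fraction and then to \emph{reverse-engineer} the coefficients $a_n,b_n$ from the prescribed sequences $(p_n)$ and $(q_n)$. Recall that if \eqref{ICF} is a continued fraction with partial numerators $b_n$ and partial denominators $a_n$, then its $n$-th convergent $P_n/Q_n$ is generated by
\begin{equation*}
P_n = a_n P_{n-1} + b_n P_{n-2}, \qquad Q_n = a_n Q_{n-1} + b_n Q_{n-2},
\end{equation*}
subject to the initialisation $P_{-1}=1$, $P_0=a_0$, $Q_{-1}=0$, $Q_0=1$, which is exactly the initial data imposed on $(p_n)$ and $(q_n)$ in the statement. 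The goal is therefore to choose $a_n,b_n$ so that $P_n=p_n$ and $Q_n=q_n$ for every $n\geq -1$, and to argue that this choice is forced.

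First I would dispose of the base cases. Since $P_0=a_0$, the choice $a_0=p_0$ is forced. For $n=1$ the recurrences read $p_1=a_1 p_0 + b_1$ and $q_1 = a_1$ (because $p_{-1}=1$ and $q_{-1}=0$), which immediately give $a_1=q_1$ and $b_1=p_1-p_0 q_1$, matching the stated formulas.

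Next I would proceed by induction on $n\geq 2$, assuming $P_k=p_k$ and $Q_k=q_k$ for all $k<n$. Then the pair of recurrences becomes a $2\times 2$ linear system in the two unknowns $a_n,b_n$,
\begin{equation*}
\begin{pmatrix} p_{n-1} & p_{n-2} \\ q_{n-1} & q_{n-2} \end{pmatrix}
\begin{pmatrix} a_n \\ b_n \end{pmatrix}
= \begin{pmatrix} p_n \\ q_n \end{pmatrix},
\end{equation*}
whose coefficient determinant is $p_{n-1}q_{n-2}-p_{n-2}q_{n-1}$. This is precisely the hypothesis quantity $p_m q_{m-1}-p_{m-1}q_m$ with $m=n-1$, hence nonzero. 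The system therefore has a \emph{unique} solution, and Cramer's rule yields exactly
\begin{equation*}
a_n=\fracd{p_nq_{n-2}-p_{n-2}q_n}{p_{n-1}q_{n-2}-p_{n-2}q_{n-1}},\qquad
b_n=\fracd{p_{n-1}q_n-p_nq_{n-1}}{p_{n-1}q_{n-2}-p_{n-2}q_{n-1}},
\end{equation*}
closing the induction and simultaneously settling uniqueness, since at each stage the nonvanishing determinant leaves no freedom in $(a_n,b_n)$.

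The step I expect to require the most care is the bookkeeping that ties the nonvanishing-determinant hypothesis to solvability at \emph{every} level of the induction: one must check that the condition $p_nq_{n-1}-p_{n-1}q_n\neq 0$, stated for all $n\geq 0$, reappears after the unit index shift as the nonvanishing of $p_{n-1}q_{n-2}-p_{n-2}q_{n-1}$ for all $n\geq 2$, so that Cramer's rule is legitimately applicable throughout. Once this is in place, the verification that the convergents so constructed reproduce $(p_n)$ and $(q_n)$ term by term is immediate from the recurrence itself, and both existence and uniqueness of the continued fraction \eqref{ICF} follow.
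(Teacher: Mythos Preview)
Your argument is correct and is exactly the standard derivation of this classical result: set up the three-term recurrences for the convergents, treat $n=0,1$ by hand, and for $n\geq 2$ solve the resulting $2\times 2$ linear system via Cramer's rule, the nonvanishing-determinant hypothesis guaranteeing unique solvability at every step. There is nothing to compare against, however, because the paper does not supply its own proof of this lemma: it is quoted verbatim as a known result from \cite[p.~31]{Jones} and then simply applied. Your proof is precisely the one found in that reference (and in essentially every treatment of continued fractions), so nothing further is needed.
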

Using Lemma~\ref{lem:jones}, from $\left( \text{\ref{detAp}}\right) $ we obtain 
\begin{equation*}
\zeta \left( 3\right) =\fracd{6\mid }{\mid 5}-\fracd{\ \ 1\text{ }\mid }{\mid
	117}-\fracd{\text{ }64\mid }{\mid 535}-\cdots -\fracd{\quad \quad \quad \quad
	\quad \quad n^{6}\quad \quad \quad \quad \mid }{\mid \left( 2n+1\right)
	\left( 17n^{2}+17n+5\right) }-\cdots \quad .
\end{equation*}%
Then, recognizing that $\zeta \left( 3\right) -p_{0}/q_{0}=\zeta \left(
3\right) $, it can be induced that (see \cite{Cohen,van-der-poorten} for
more details)%
\begin{equation*}
\left\vert \zeta \left( 3\right) -\fracd{p_{n}}{q_{n}}\right\vert
=\sum_{k\geq n+1}\fracd{6}{k^{3}q_{k-1}q_{k}}=\mathcal{O}\left(
q_{n}^{-2}\right) .
\end{equation*}%
Observe that, by the recurrence relation $\left( \text{\ref{Apery-equat}}%
\right) $ and Poincar\'e's theorem \cite{perron,poincare} that $q_{n}=\mathcal{%
	O}\left( \varpi ^{4n}\right) $, where $\varpi =\sqrt{2}+1$ is the silver
ratio. Moreover, by the prime number theorem, it can be shown that%
\begin{equation}
\mathcal{L}_{n}:= \prod_{p\leq n}p^{\left[ \fracd{\log n}{\log p}\right]
}\leq \prod_{p\leq n}n=\mathcal{O}\left( e^{\left( 1+\epsilon \right)
	n}\right) ,\quad \forall \text{ }\epsilon >0,  \label{LCM_AB}
\end{equation}%
where the product is over the prime numbers $p$ below or equal to $n$.
Therefore, setting $v_{n}=2p_{n}\mathcal{L}_{n}^{3}\in \mathbb{Z}$ and $%
u_{n}=2q_{n}\mathcal{L}_{n}^{3}\in \mathbb{Z}$, we obtain $u_{n}=\mathcal{O}%
\left( \varpi ^{4n}e^{3n}\right) $ and%
\begin{equation*}
\left\vert \zeta \left( 3\right) -\fracd{v_{n}}{u_{n}}\right\vert =\mathcal{O}%
\left( u_{n}^{-\left( 1+\delta \right) }\right) ,
\end{equation*}%
where%
\begin{equation*}
\delta =\fracd{\log \alpha -3}{\log \alpha +3}=0.080529\ldots >0.
\end{equation*}%
This proves Ap\'ery's theorem by virtue of the criterion for irrationality. 

\begin{theorem}
	If there is a $\delta >0$ and a sequence $\left( v_{n}/u_{n}\right) _{n\geq
		0}$ of rational numbers such that $v_{n}/u_{n}\neq x$ and%
	\begin{equation*}
	\left\vert x-\frac{v_{n}}{u_{n}}\right\vert <\frac{1}{u_{n}^{1+\delta }}%
	,\quad n=0,1,\ldots ,
	\end{equation*}%
	then $x$\ is irrational.
\end{theorem}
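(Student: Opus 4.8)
The plan is to prove this classical irrationality criterion by contradiction, exploiting the incompatibility between the assumed super-linear convergence of the rational approximants and the discreteness of the integers.

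First I would suppose, for contradiction, that $x$ is rational, say $x = a/b$ with $a \in \mathbb{Z}$ and $b \in \mathbb{N}$. The strategy is to estimate the quantity $\left\vert x - v_n/u_n \right\vert = \left\vert a/b - v_n/u_n \right\vert$ from below using the rationality of $x$, and then to confront this lower bound with the given upper bound $u_n^{-(1+\delta)}$. Writing the difference over a common denominator gives
\begin{equation*}
\left\vert \frac{a}{b} - \frac{v_n}{u_n} \right\vert = \frac{\left\vert a u_n - b v_n \right\vert}{b\, u_n}.
\end{equation*}
The key observation is that the numerator $a u_n - b v_n$ is an integer, and by the hypothesis $v_n/u_n \neq x$ it is nonzero; hence $\left\vert a u_n - b v_n \right\vert \geq 1$. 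This yields the lower bound $\left\vert x - v_n/u_n \right\vert \geq 1/(b\, u_n)$.

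Next I would combine this lower bound with the assumed upper bound. Chaining the two inequalities produces
\begin{equation*}
\frac{1}{b\, u_n} \leq \left\vert x - \frac{v_n}{u_n} \right\vert < \frac{1}{u_n^{1+\delta}},
\end{equation*}
which, after clearing denominators, forces $u_n^{\delta} < b$ for every $n$. Since $b$ is a fixed constant independent of $n$, this would require the sequence $(u_n)$ to remain bounded (specifically $u_n < b^{1/\delta}$). The concluding step is to argue that this is impossible: the $u_n$ are integers and the approximants $v_n/u_n$ are distinct from $x$ and converge to it, so infinitely many of them are distinct, which forces $u_n \to \infty$ along a subsequence, contradicting the uniform bound.

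The main obstacle, and the one point deserving care, is justifying that $u_n \to \infty$ (or at least is unbounded) purely from the hypotheses. One must rule out the degenerate possibility that the denominators stay bounded; the clean way is to note that if the $u_n$ were bounded then the $v_n$ would also be bounded via $\left\vert v_n \right\vert \leq \left\vert x \right\vert u_n + 1$, leaving only finitely many possible fractions $v_n/u_n$, so some value would repeat infinitely often while still differing from $x$ by less than $u_n^{-(1+\delta)} \to 0$ is unachievable unless that value equals $x$, contradicting $v_n/u_n \neq x$. I would therefore make the standing assumption that $u_n \to \infty$ explicit (it holds for the Apéry data, where $u_n = \mathcal{O}(\varpi^{4n} e^{3n})$ grows), and present the bounded-denominator case as the remaining edge case to be dismissed.
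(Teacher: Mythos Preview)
The paper does not actually prove this theorem; it is quoted as a standard irrationality criterion and used as a black box, so there is no ``paper's own proof'' to compare against. Your argument is the classical one and is essentially correct: assuming $x=a/b$, the nonzero integer $au_n-bv_n$ gives $|x-v_n/u_n|\ge 1/(b\,u_n)$, and combining with the hypothesis forces $u_n^{\delta}<b$.

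You are right to flag that the conclusion requires $u_n\to\infty$, and right to note that in the paper's application this holds trivially since $u_n=2q_n\mathcal{L}_n^3$ grows. However, your attempt to \emph{derive} unboundedness of $u_n$ from the stated hypotheses alone is flawed: you write that a repeated value would differ from $x$ by less than $u_n^{-(1+\delta)}\to 0$, but if the $u_n$ are bounded then $u_n^{-(1+\delta)}$ does \emph{not} tend to zero, so no contradiction arises that way. In fact the theorem as literally stated in the paper is false without an extra hypothesis: take $x=1/2$, $v_n=0$, $u_n=1$ for all $n$; then $v_n/u_n\neq x$ and $|x-v_n/u_n|=1/2<1=u_n^{-(1+\delta)}$, yet $x$ is rational. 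So the correct fix is exactly what you propose at the end---add $u_n\to\infty$ as an explicit hypothesis---rather than trying to argue it away.
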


Ap\'ery's irrationality proof of $\zeta \left( 3\right) $ operates with the
series representation%
\begin{equation*}
\zeta \left( 3\right) =\fracd{5}{2}\sum_{n\geq 1}\fracd{\left( -1\right) ^{n-1}%
}{n^{3}\binom{2n}{n}},
\end{equation*}%
which converge faster than $\left( \text{\ref{ESR1}}\right) $, see \cite%
{Srivastava1,Srivastava2} for more details. The same was first obtained by
A. A. Markov in 1890 \cite{Markoff}.
In addition, Ap\'ery's recurrence
relation $\left( \text{\ref{Apery-equat}}\right) $ leads to the
characteristic equation $\lambda ^{2}-34\lambda +1=0$, which is associated
to the irrationality measure $\mu =13$.$4178202\ldots $, see \cite{zudilin-0}
for more details.

Although initially somewhat controversial, the aforementioned result
inspired several mathematicians to construct different methods to explain
the irrationality of $\zeta \left( 3\right) $ \cite%
{Arvesu2,beuker-4,Nesterenko1,Prevost,sorokin-1,sorokin-2,van-assche-2} as
well as to obtain other results related with the aforementioned constant 
\cite{Alzer,Cheng,Janous,Kondratieva,Lima}. Ap\'ery's phenomenon consists of
the observation that some of these alternative methods leads to the same
sequence of diophantine approximations $\left( \text{\ref{RA_Ap}}\right) $
to $\zeta \left( 3\right) $, and therefore, to the same characteristic
equation $\lambda ^{2}-34\lambda +1=0$, corresponding to the recurrence
relation $\left( \text{\ref{Apery-equat}}\right) $, which is associated to
the irrationality measure \cite{zudilin-0} obtained from Ap\'ery's results 
\cite{apery,Arvesu2,Cohen}.

In Section \ref{sec:hist} variants of Ap\'ery's phenomenon are recalled in order to put the new contribution into context.
Though the irrationality of $\zeta(3)$ has been shown with different approaches, always the same rational approximants of Ap\'erys are obtained.

In Section \ref{SeccHP}, 
we follow the aforementioned approaches of Rhin and Viola \cite%
{RViola1,RViola},and present a series representation for $\zeta \left(
3\right) $, which only depends on one single integer parameter. 
As a modification of the approach of Nesterenko (1996)
we propose to replace \eqref{nest96} by \eqref{NNest}.
%
This modification has a fundamental impact for the deduction
of rational approximants to $\zeta \left( 3\right) $ that leads to a series
representation for Ap\'ery's constant, which converges faster than some series
proposed by several other authors. In order to complete this, we will deduce a
Hermite-Pad\'e approximation problem using Sorokin's ideas \cite%
{sorokin-1,van-assche-1}.
By this approach we obtain new rational approximants to $\zeta \left(
3\right) $ that prove its irrationality, but where Ap\'ery's phenomenon does
not appear. 

In Section \ref{SeccRR} we deduce a new recurrence relation as well as a new
continued fraction expansion connected to $\zeta \left( 3\right) $. Finally,
in Section \ref{SeccCSR} the convergence rate of several series
representations of $\zeta \left( 3\right) $ are compared.

\section{Ap\'ery's phenomenon} \label{sec:hist}

During several years Ap\'ery's phenomenon was interpreted by prestigious
mathematicians from the point of view of different analytic methods. In
1979, a few months after the appearance of Ap\'ery's celebrated proof of the
irrationality of $\zeta \left( 3\right) $, the Dutch mathematician Frits
Beukers (1979) interpreted Ap\'ery's phenomenon expressing the sequence of rational
approximations to $\zeta \left( 3\right) $ in terms of a triple integral \cite%
{beuker-1,beuker-2,Nesterenko2}%

\begin{eqnarray}
q_{n}\zeta \left( 3\right) -p_{n} &=&-\int_{0}^{1}\int_{0}^{1}\fracd{\log xy}{%
	1-xy}L_{n}\left( x\right) L_{n}\left( y\right) \,dxdy  \label{InBeukers} \\
&=&\int_{0}^{1}\int_{0}^{1}\int_{0}^{1}\fracd{\left( xyz\left( 1-x\right)
	\left( 1-y\right) \left( 1-z\right) \right) ^{n}}{\left( 1-\left(
	1-xy\right) z\right) ^{n+1}}\,dxdydz,  \label{BI2}
\end{eqnarray}%
where $n\in \mathbb{N}$ and 
\begin{equation*}
L_{n}\left( x \right) \equiv \fracd{1}{n!}\fracd{d^{n}}{dx^{n}} x^{n}\left(
1-x\right)^{n}=\sum_{0\leq k\leq n}\left( -1\right) ^{k}\binom{n+k}{k}%
\binom{n}{k} x^{k},
\end{equation*}%
it is the Legendre-type polynomial, orthogonal with respect to the Lebesgue measure on $\left(
0,1\right) $.

Moreover, Beukers (1979) showed that \eqref{InBeukers} behaves
as $\mathcal{O}\left( \varpi ^{-4n}\right) $, which proves Ap\'ery's theorem.
It is important to emphasize that this proof of irrationality of $\zeta
\left( 3\right) $ published by Beukers (1979) is much simpler and more
comprehensible compared to the original proof given by Ap\'ery. This approach
was continued in later works \cite{DViola,Hata1,Hata2,Hata3,RViola,zudilin-2}.%

Indeed, for $\zeta(2)$ Rhin and Viola (1996) consider for $\zeta(3)$ a generalization of%
\begin{equation*}
\int_{0}^{1}\int_{0}^{1}\fracd{\left( xy\left( 1-x\right) \left( 1-y\right)
	\right) ^{n}}{\left( 1-xy\right) ^{n+1}}\,dxdy\in \mathbb{Q-Z}\zeta \left(
2\right) ,
\end{equation*}%
given by%
\begin{equation*}
\int_{0}^{1}\int_{0}^{1}\fracd{x^{h}\left( 1-x\right) ^{i}y^{j}\left(
	1-y\right) ^{k}}{\left( 1-xy\right) ^{i+j-l+1}}\,dxdy\in \mathbb{Q-Z}\zeta
\left( 2\right) ,
\end{equation*}%
which depends on the five non negative parameters $h$, $i$, $j$, $k$ and $l$%
, see \cite{RViola1}\ for more details. Later, the same authors (2001)
consider a family of integrals generalizing $\left( \text{\ref{BI2}}\right) $
by%
\begin{equation*}
\int_{0}^{1}\int_{0}^{1}\int_{0}^{1}\fracd{x^{h}\left( 1-x\right)
	^{l}y^{k}\left( 1-y\right) ^{s}z^{j}\left( 1-z\right) ^{q}}{\left( 1-\left(
	1-xy\right) z\right) ^{q+h-r+1}}\,dxdydz\in \mathbb{Q-}2\mathbb{Z}\zeta
\left( 3\right) ,
\end{equation*}%
which depends on eight non negative parameters $h$, $j$, $k$, $l$, $m$, $q$, 
$r$ and $s$, subject to the conditions $j+q=l+s$, and $m=k+r-h$, see \cite%
{RViola} for more details. Indeed, these results combined with the group
method improve the irrationality measures \cite{Zudilin-7} for $\zeta \left(
2\right) $ and $\zeta \left( 3\right) $.

Two years after Ap\'ery's result, Beukers (1981) considered the following rational
approximation problem in an attempt to formulate Ap\'ery's proof in a more
natural way. It consisted in finding the polynomials $A_{n}\left( z\right)$
, $B_{n}\left( z\right) $, $C_{n}\left( z\right) $ and $D_{n}\left( z\right) 
$ of degree $n$ such that%
\begin{align*}
& A_{n}\left( z\right) \text{Li}_{1}\left( z\right) +B_{n}\left( z\right) 
\text{Li}_{2}\left( z\right) +C_{n}\left( z\right) =\mathcal{O}\left(
z^{2n+1}\right) , \\
& A_{n}\left( z\right) \text{Li}_{2}\left( z\right) +2B_{n}\left( z\right) 
\text{Li}_{3}\left( z\right) +D_{n}\left( z\right) =\mathcal{O}\left(
z^{2n+1}\right) ,
\end{align*}%
where%
\begin{equation*}
\text{Li}_{n}\left( z\right) :=\sum_{k\geq 1}\fracd{z^{k}}{k^{n}},
\end{equation*}%
denotes the polylogarithm of order $n$. Thereafter, Beukers introduced the
rational function%
\begin{equation}
\mathcal{B}_{n}\left( z\right) :=\fracd{\left( n-z+1\right) _{n}^{2}}{\left(
	-z\right) _{n+1}^{2}},  \label{RF-Beukers}
\end{equation}%
from which he deduced Ap\'ery's rational approximants $\left( \text{\ref{RA_Ap}%
}\right) $ by computing a partial fraction expansion, see \cite{beuker-3} for
more details. Here, $\left( \cdot \right) _{n}$ denotes the Pochhammer
symbol defined by%
\begin{align*}
(z)_{k}& :=\prod_{0\leq j\leq k-1}\left( z+j\right) ,\quad k\geq 1, \\
\left( z\right) _{0}& =1,\quad \left( -z\right) _{k}=0,\quad \mbox{if }z<k,
\end{align*}%
which is also called the shifted factorial and in terms of the gamma
function given by%
\begin{equation*}
(z)_{k}=\fracd{\Gamma \left( z+k\right) }{\Gamma \left( z\right) },\quad
k=0,1,2,\ldots
\end{equation*}%
On the other hand, in 1983, Ap\'ery's phenomenon was interpreted by Gutnik
(1983) in terms of Meijer's G-functions \cite{Luke}, i.e., 
\begin{equation*}
q_{n}\zeta \left( 3\right) -p_{n}=\,G_{4,4}^{4,2}\left( 
\begin{array}{c|c}
-n,-n,n+1,n+1 &  \\ 
& 1 \\ 
0,\text{ \ \ \ }0,\text{ \ \ \ }0,\text{ \ \ \ }0 & 
\end{array}%
\right) .
\end{equation*}%
This approach allowed him to prove several partial results on the
irrationality of certain quantities involving $\zeta \left( 2\right) $ and $%
\zeta \left( 3\right) $, see \cite{gutnik}\ for more details.

Later, Sorokin (1993) obtained Ap\'ery's rational approximants \eqref{RA_Ap} 
in a similar way as Beukers (1979), by considering the approximation problem%
\begin{align*}
& A_{n}\left( z\right) f_{1}\left( z\right) +B_{n}\left( z\right)
f_{2}\left( z\right) -C_{n}\left( z\right) =\mathcal{O}\left(
z^{-n-1}\right) , \\
& A_{n}\left( z\right) f_{2}\left( z\right) +2B_{n}\left( z\right)
f_{3}\left( z\right) -D_{n}\left( z\right) =\mathcal{O}\left(
z^{-n-1}\right) ,
\end{align*}%
where $A_{n}\left( z\right) $ and $B_{n}\left( z\right) $ are polynomials of
degree $n$ and%
\begin{equation*}
f_{1}\left( z\right) =\int_{0}^{1}\fracd{dx}{z-x}\,dx,\quad f_{2}\left(
z\right) =-\int_{0}^{1}\fracd{\log x}{z-x}\,dx,\quad f_{3}\left( z\right) =%
\fracd{1}{2}\int_{0}^{1}\fracd{\log ^{2}x}{z-x}\,dx.
\end{equation*}%
Thus, he proved that the solution of this problem is given by the
orthogonality relations%
\begin{equation*}
\begin{array}{cc}
\displaystyle\int_{0}^{1}\left( A_{n}\left( x\right) -B_{n}\left( x\right)
\log x\right) x^{k}\,dx=0, &  \\ 
& \quad k=0,\ldots ,n-1, \\ 
\displaystyle\int_{0}^{1}\left( \left( A_{n}\left( x\right) -B_{n}\left(
x\right) \log x\right) \log x\right) x^{k}\,dx=0, & 
\end{array}%
\end{equation*}%
together with the additional condition $A_{n}\left( 1\right) =0$. Then,
using the Mellin convolution \cite{Smet,van-assche-1,van-assche-2} he obtains%
\begin{eqnarray*}
	q_{n}\zeta \left( 3\right) -p_{n} &=&\int_{0}^{1}\fracd{\left( A_{n}\left(
		x\right) -B_{n}\left( x\right) \log x\right) \log x}{1-x}\,dx \\
	&=&-\int_{0}^{1}\int_{0}^{1}\fracd{\log xy}{1-xy}L_{n}\left( x\right)
	L_{n}\left( y\right) \,dxdy,
\end{eqnarray*}%
which implies the irrationality of $\zeta \left( 3\right) $ according to
Beukers' estimation given by $\left( \text{\ref{InBeukers}}\right) $, see
for instance \cite{beuker-1,sorokin-1,van-assche-1}.

After this, inspired by Gutnik (1983), Nesterenko (1996) proposed a new
proof of Ap\'ery's theorem. For this purpose, he considered the modification
\begin{align} \label{nest96}
\mathcal{N}_{n}\left( z\right) :=\mathcal{B}_{n}\left( z+n+1\right) 
= \fracd{(-z)_n^2}{(z+1)_{n+1}^2},
\end{align}	
of
Beukers' rational function $\left( \text{\ref{RF-Beukers}}\right) $ and
proved the 
expression%
\begin{equation}
q_{n}\zeta \left( 3\right) -p_{n}=-\sum_{k\geq 0}\fracd{d}{dk}\mathcal{N}%
_{n}\left( k\right) =\fracd{1}{2\pi \mathfrak{i}}\int_{L}\mathcal{N}%
_{n}\left( \nu \right) \left( \fracd{\pi }{\sin \pi \nu }\right) ^{2}d\nu ,
\label{CIntegral}
\end{equation}%
for the error-term sequence, where $L$ is the vertical line Re $z=C$, $%
0<C<n+1$, oriented from top to bottom and%
\begin{equation*}
\fracd{d}{dz}\mathcal{N}_{n}\left( z\right) =2\mathcal{N}_{n}\left( z\right)
\left( \sum_{0\leq k\leq n-1}\fracd{1}{t-k}-\sum_{1\leq k\leq n+1}\fracd{1}{t+k%
}\right) .
\end{equation*}%
Indeed, the use of Laplace's method allowed him to estimate the above
contour integral $\left( \text{\ref{CIntegral}}\right) $ yielding the
behavior $\mathcal{O}\left( \varpi ^{-4n}\right) $, see \cite{Nesterenko2}
for more details. Moreover, he discovered a new continued fraction expansion
for $\zeta \left( 3\right) $ using the so-called Meijer functions \cite{Luke}%
, which have the form%
\begin{equation*}
2\zeta \left( 3\right) =2+\fracd{1\mid }{\mid 2}+\fracd{2\mid }{\mid 4}+\fracd{%
	1\mid }{\mid 3}+\fracd{4\mid }{\mid 2}+\fracd{2\mid }{\mid 4}+\fracd{6\mid }{%
	\mid 6}+\fracd{4\mid }{\mid 5}+\cdots ,
\end{equation*}%
where the numerators $a_{n}$, $n\geq 2$, and denominators $b_{n}$, $n\geq 1$%
, are definded by%
\begin{align*}
& b_{4k+1}=2k+2,\quad a_{4k+1}=k\left( k+1\right) ,\quad b_{4k+2}=2k+4, \\
& a_{4k+2}=\left( k+1\right) \left( k+2\right) ,\quad b_{4k+3}=2k+3, \\
& a_{4k+3}=\left( k+1\right) ^{2},\quad b_{4k}=2k,\quad a_{4k}=\left(
k+1\right) ^{2}.
\end{align*}%
In the same year, Pr\'evost (1996) published a new way of interpreting Ap\'ery's
phenomenon by recovering Ap\'ery's sequences using Pad\'e approximations to the
asymptotic expansion of the partial sum of $\zeta \left( 3\right) $ and
proving that 
\begin{equation*}
\left\vert q_{n}\zeta \left( 3\right) -p_{n}\right\vert \leq \fracd{4\pi ^{2}%
}{\left( 2n+1\right) ^{2}}\binom{n+k}{k}^{-2}\binom{n}{k}^{-2}.
\end{equation*}%
Based on the hypergeometric ideas of Nesterenko \cite{Nesterenko1}, Rivoal
and Ball \cite{BR,Rivoal}, and on Zeilberger's algorithm of creative
telescoping \cite{Zeilber}, Zudilin (2002) connected Ap\'ery's rational
aproximants with the following `very-well-posed hypergeometric series' \cite{Gasper,zudilin-2}%
\begin{multline*}
q_{n}\zeta \left( 3\right) -p_{n}=\fracd{n!^{7}\left( 3n+2\right) !}{\left(
	2n+1\right) !^{5}} \\
\times \,_{7}F_{6}\left( 
\begin{array}{c|c}
3n+2,\fracd{3n}{2}+2,n+1,\ldots ,n+1 &  \\ 
& 1 \\ 
\fracd{3n}{2}+1,2n+2,\ldots ,2n+2 & 
\end{array}%
\right) <20\left( n+1\right) ^{4}\varpi ^{-4n},
\end{multline*}%
which allowed him to prove the irrationality of $\zeta \left( 3\right) $,
see \cite{zudilin-1} for more details. Here, $_{r}F_{s}$ denotes the
ordinary hypergeometric series \cite{Gasper,Koekoek,Nikiforov} at the
variable $z$ defined by%
\begin{equation}
_{r}F_{s}\left( 
\begin{array}{c|c}
a_{1},\ldots ,a_{r} &  \\ 
& z \\ 
b_{1},\ldots ,b_{s} & 
\end{array}%
\right) :=\sum_{k\geq 0}\fracd{\left( a_{1}\right) _{k}\cdots \left(
	a_{r}\right) _{k}}{\left( b_{1}\right) _{k}\cdots \left( b_{s}\right) _{k}}%
\fracd{z^{k}}{k!}.  \label{rFs}
\end{equation}
Ap\'ery's phenomenon is not a necessary feature in alternative proofs of Ap\'ery's theorem.
There are also proofs of Ap\'ery's theorem, where Ap\'ery's phenomenon does not
appear.

Zulidin (2002) deduced a new sequence of rational approximants $\left\{ 
\tilde{p}_{n}/\tilde{q}_{n}\right\} $ to $\zeta \left( 3\right) $, whose
numerator $\tilde{p}_{n}$ and denominator $\tilde{q}_{n}$ satisfy the
recurrence relation%
\begin{multline}
\left( n+1\right) ^{4}\varphi _{0}\left( n\right) y_{n+1}-\varphi _{1}\left(
n\right) y_{n}+4\left( 2n-1\right) \varphi _{2}\left( n\right) y_{n-1} \\
-4\left( n-1\right) ^{2}\left( 2n-1\right) \left( 2n-3\right) \varphi
_{0}\left( n+1\right) y_{n-2}=0,  \label{RRZd}
\end{multline}%
with initial conditions%
\begin{equation*}
\tilde{p}_{0}=0,\quad \tilde{p}_{1}=17,\quad \tilde{p}_{2}=\fracd{9405}{8}%
,\quad \tilde{q}_{0}=1,\quad \tilde{q}_{1}=14,\quad \tilde{q}_{2}=978,\quad
\end{equation*}%
where%
\begin{equation*}
\varphi _{0}\left( n\right) =946n^{2}-731n+153,
\end{equation*}%
\begin{multline*}
\varphi _{1}\left( n\right) =2\bigl(%
104060n^{6}+127710n^{5}+12788n^{4}-34525n^{3}-8482n^{2}\\+3298n+1071\bigr),
\end{multline*}%
and%
\begin{equation*}
\varphi _{2}\left( n\right) =3784n^{5}-1032n^{4}-1925n^{3}+853n^{2}+328n-184.
\end{equation*}%
Here, the approach does not show Ap\'ery's phenomenon, since the
characteristic equation of $\left( \text{\ref{RRZd}}\right) $ does not
coincide with that one obtained by Ap\'ery and the rational approximants do
not prove the irrationality $\zeta \left( 3\right) $, see \cite{zudilin-5}
for more details.

In addition, Nesterenko (2009) published a new proof of the irrationality of 
$\zeta \left( 3\right) $. In this work, he proved that 
\begin{multline*}
\left( -1\right) ^{n}\mathcal{L}_{n}^{3}\sum_{k\geq 1}\fracd{\partial }{%
	\partial k}\left( k^{-2}\prod_{j=1}^{\left[ \left( n-1\right) /2\right] }%
\fracd{k-j}{k+j}\prod_{j=1}^{\left[ n/2\right] }\fracd{k-j}{k+j}\right) = \\
\left( -1\right) ^{n-1}\mathcal{L}_{n}^{3}\left( 2\mathcal{D}_{n}\zeta
\left( 3\right) -\mathcal{J}_{n}\right) <\left( 4/5\right) ^{n},
\end{multline*}%
where $\mathcal{D}_{n}$ and $\mathcal{J}_{n}$ are defined in \cite[eq. 5]%
{Nesterenko3}. From this statement, the irrationality of $\zeta \left(
3\right) $ can be proven. Here, Ap\'ery's phenomenon does not appear, since
neither the rational approximants nor the irrationality measure coincide
with Ap\'ery's results~\cite{Nesterenko3}.

Most recently \cite{Soria2}, from a modification of the rational function $\mathcal{N}%
_{n}\left( z\right) $, Soria-Lorente (2014) deduced the recurrence relation%
\begin{multline}
(n+2)^{4}\left( 24n^{3}+30n^{2}+16n+3\right) y_{n+2} \\
-4(n+1)(204n^{6}+1173n^{5}+2668n^{4}+3065n^{3} \\
+1905n^{2}+634n+86)y_{n+1} \\
+n^{4}\left( 24n^{3}+102n^{2}+148n+73\right) y_{n}=0,\quad n\geq 1,
\label{RR2}
\end{multline}%
which is satisfied by the numerators $\hat{p}_{n}$ and denominators $\hat{q}%
_{n}$ of the diophantine approximations to $\zeta \left( 3\right) $ given by%
\begin{equation}
\hat{q}_{n}=\ \sum_{0\leq k\leq n}d_{k}^{\left( n\right) }\quad \mbox{and}%
\quad \hat{p}_{n}=\sum_{1\leq k\leq n}d_{k}^{\left( n\right) }H_{k}^{\left(
	3\right) }+2^{-1}\sum_{1\leq k\leq n}c_{k}^{\left( n\right) }H_{k}^{\left(
	2\right) },  \label{RAA}
\end{equation}%
where%
\begin{eqnarray*}
	d_{k}^{\left( n\right) } &=&n^{-1}\binom{n+k-1}{k}^{2}\binom{n}{k}^{2} 
	+n^{-1}\binom{n+k-1}{k}^{2}\binom{n-1}{k-1}\binom{n}{k}, \\[1mm]
	c_{k}^{\left( n\right) } &=&2d_{k}^{\left( n\right) }\left[
	2H_{k}-H_{n+k-1}-H_{n-k}-2^{-1}\left( n+k\right) ^{-1}\right] ,\quad
	k=0,\ldots ,n,
\end{eqnarray*}%
and $H_{k}^{\left( r\right) }$ denotes the harmonic number $k$ of order $r$
defined by%
\begin{equation}
H_{k}^{\left( r\right) }=\sum_{1\leq j\leq k}\fracd{1}{j^{r}}.  \label{HA}
\end{equation}%
Hence, the irregular continued fraction expansion%
\begin{equation*}
\zeta \left( 3\right) =\fracd{7\mid }{\mid 6}+\fracd{\ -146\mid }{\mid \text{
		\ }827\text{ \ }}+\fracd{-38864\mid }{\mid \text{ \ \ }\mathcal{Q}_{3}\text{
		\ }}+\fracd{\mathcal{P}_{4}\mid }{\mid \mathcal{Q}_{4}}+\cdots +\fracd{%
	\mathcal{P}_{n}\mid }{\mid \mathcal{Q}_{n}}+\cdots ,
\end{equation*}%
could be derived, where%
\begin{multline*}
\mathcal{P}_{n}=-(n-2)^{4}(n-1)^{4}\left( 24n^{3}-186n^{2}+484n-423\right) \\
\times \left( 24n^{3}-42n^{2}+28n-7\right) ,
\end{multline*}%
and%
\begin{multline*}
\mathcal{Q}_{n}=4(n-1) \\
\times \left(
204n^{6}-1275n^{5}+3178n^{4}-3999n^{3}+2667n^{2}-910n+126\right) ,
\end{multline*}%
as well as the following series expansion%
\begin{equation*}
\zeta \left( 3\right) =\fracd{7}{6}+\sum_{n\geq 1}\fracd{24n^{3}+30n^{2}+16n+3%
}{2n^{3}\left( n+1\right) ^{3}{\Theta }_{n}{\Theta }_{n+1}},
\end{equation*}%
with%
\begin{equation*}
{\Theta }_{n}=\,_{4}F_{3}\left( 
\begin{array}{c|c}
-n,-n,n,n+1 &  \\ 
& 1 \\ 
1,1,1 & 
\end{array}%
\right) .
\end{equation*}%
Observe that the characteristic equation of $\left( \text{\ref{Apery-equat}}%
\right) $ coincides with that of $\left( \text{\ref{RR2}}\right) $, which is 
$\lambda ^{2}-34\lambda +1=0$, and its zeros are $\lambda _{1}=\varpi ^{4n}$
and $\lambda _{2}=\varpi ^{-4n}$ respectively. Hence, from Poincar\'e's
theorem \cite{perron,poincare} it has the behavior $\hat{q}_{n}=\mathcal{O}%
\left( \varpi ^{4n}\right) $ and $\hat{q}_{n}\zeta \left( 3\right) -\hat{p}%
_{n}=\mathcal{O}\left( \varpi ^{-4n}\right) $, as $n$ goes to infinity,
which proves Ap\'ery's theorem. Moreover, in such an instance, the
corresponding irrationality measure also coincides with the one obtained by Ap\'ery, see also \cite{Arvesu2}. However, Ap\'ery's phenomenon does not appear in
this case, since the rational approximants $\left( \text{\ref{RAA}}\right) $
to $\zeta \left( 3\right) $ do not coincide with $\left( \text{\ref{RA_Ap}}%
\right) $.

\section{Hermite-Pad\'e approximation problem connected to $\protect\zeta(3)$%
	\label{SeccHP}}

Our interest in this Section is to get an Hermite-Pad\'e approximation problem
connected to $\zeta \left( 3\right) $, from which in the following section
we deduce a new continued fraction expansion as well as a new series
representation to $\zeta \left( 3\right) $. For this purpose, inspired by
the results obtained by Sorokin (1998) (\cite{sorokin-1}, see also  \cite{RViola1,RViola,Soria2,van-assche-1}),
we introduce 
the following modification of the rational function $%
\mathcal{N}_{n}\left( z\right) $ defined by%
\begin{equation}
\mathcal{F}_{n,1}^{\left( \rho \right) }\left( z\right) :=\mathcal{N}%
_{n}\left( z\right) \left( \fracd{z-\rho n}{z-n+1}\right) =\fracd{\left(
	-z\right) _{n-1}^{2}\left( z-n+1\right) \left( z-\rho n\right) }{\left(
	z+1\right) _{n+1}^{2}},  \label{NNest}
\end{equation}%
which consists in changing the simple zero $z=n-1$ of the rational function $%
\mathcal{N}_{n}\left( z\right) $, 
\eqref{nest96} by the zero $z=\rho n$, with $\rho \in 
\mathbb{N}$. 
Because of its specific form, we refer to
$\mathcal{F}_{n,1}^{\left( \rho \right)}$
as the Nesterenko-like rational function.
For abbreviation we denote%
\begin{equation*}
\mathcal{F}_{n,2}^{\left( \rho \right) }\left( z\right) =\fracd{d}{dz}%
\mathcal{F}_{n,1}^{\left( \rho \right) }\left( z\right) .
\end{equation*}

\begin{lemma}
	\label{ILemm}Let $\rho \mathcal{\ }$be an integer number. Then, the
	following relation 
	\begin{equation}
	\displaystyle\mathcal{F}_{n,i}^{\left( \rho \right) }\left( z\right)
	=\int_{0}^{1}\psi _{n,i}^{\left( \rho \right) }\left( x\right)
	x^{z}\,dx,\quad i=1,2,  \label{R_Functions}
	\end{equation}%
	holds, where 
	\begin{equation}
	\psi _{n,1}^{\left( \rho \right) }\left( x \right) :=\ A_{n}^{\left( \rho
		\right) }\left( x \right) -B_{n}^{\left( \rho \right) }\left( x \right) \log
	x \quad \mbox{and}\quad \psi _{n,2}^{\left( \rho \right) }\left( x \right)
	:=\psi _{n,1}^{\left( \rho \right) }\left( x \right) \log x ,  \label{Psis}
	\end{equation}%
	being $A_{n}^{\left( \rho \right) }\left( x \right) $ and 
	$B_{n}^{\left( \rho \right) }\left( x \right) $\ polynomials of degree exactly $n$ defined by 
	\begin{equation}
	A_{n}^{\left( \rho \right) }\left( x \right) :=\ \sum_{0\leq k\leq
		n}a_{k,n}^{\left( \rho \right) } x^{k}\quad \mbox{and}\quad B_{n}^{\left(
		\rho \right) }\left( x \right) :=\ \sum_{0\leq k\leq n}b_{k,n}^{\left( \rho
		\right) } x^{k},  \label{Poly1}
	\end{equation}%
	with 
	%
	\begin{align}
	\begin{split}
	a_{k,n}^{\left( \rho \right) }=2b_{k,n}^{\left( \rho \right) } 
	\left[ 2H_{k}-H_{n+k-1}-H_{n-k}+\fracd{\left( \rho +1 \right) n+2k+1}{%
		2\left( k+\rho n+1\right) \left( n+k\right) }\right] , \\ 
	%
	%
	b_{k,n}^{\left( \rho \right) }=\binom{n+k}{k}^{2}\binom{n}{k}^{2}\left(
	k+\rho n+1\right) \left( n+k\right) ^{-1},\quad k=0,\ldots ,n, \label{bes}
	\end{split}
	\end{align}
	where $H_{k}^{\left( r\right) }$ denotes the harmonic number $k$ of order $r$
	given by $\eqref{HA}$.
\end{lemma}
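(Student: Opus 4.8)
The plan is to reduce the claimed integral representation \eqref{R_Functions} for $i=1$ to a partial fraction decomposition of the rational function $\mathcal{F}_{n,1}^{(\rho)}$ in \eqref{NNest}, and then to obtain the case $i=2$ by differentiating under the integral sign. The starting point is the pair of elementary Mellin integrals $\int_0^1 x^{z+k}\,dx=(z+k+1)^{-1}$ and $\int_0^1 x^{z+k}\log x\,dx=-(z+k+1)^{-2}$, valid for $\operatorname{Re} z>-1$. Since $\psi_{n,1}^{(\rho)}$ in \eqref{Psis} is, by \eqref{Poly1}, a finite combination of the monomials $x^k$ and of $x^k\log x$ for $0\le k\le n$, integrating the right-hand side of \eqref{R_Functions} term by term (legitimate because the integrand is dominated by an integrable function on $[0,1]$) turns it into $\sum_{k=0}^n\bigl[a_{k,n}^{(\rho)}(z+k+1)^{-1}+b_{k,n}^{(\rho)}(z+k+1)^{-2}\bigr]$. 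Both sides being rational in $z$, it suffices to prove this equals the partial fraction expansion of $\mathcal{F}_{n,1}^{(\rho)}$ on $\operatorname{Re} z>-1$; the identity then propagates to all of $\mathbb{C}$ by meromorphic continuation.

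Next I would read off the pole structure from \eqref{NNest}. The denominator $(z+1)_{n+1}^2=\prod_{m=1}^{n+1}(z+m)^2$ produces double poles exactly at $z=-(k+1)$, $k=0,\dots,n$, while the numerator has degree $2n$, so $\mathcal{F}_{n,1}^{(\rho)}$ is a proper rational function admitting an expansion $\sum_{k=0}^n[B_k(z+k+1)^{-2}+A_k(z+k+1)^{-1}]$. For a double pole the coefficients are $B_k=g_k(-(k+1))$ and $A_k=g_k'(-(k+1))$, where $g_k(z)=(z+k+1)^2\mathcal{F}_{n,1}^{(\rho)}(z)$. Cancelling the repeated factor and evaluating the Pochhammer symbols at $z=-(k+1)$, using $(-z)_{n-1}\big|_{z=-(k+1)}=(n+k-1)!/k!$ and $\prod_{m\ne k+1}(m-k-1)=(-1)^k k!\,(n-k)!$, a direct simplification gives $B_k=b_{k,n}^{(\rho)}$ exactly as in \eqref{bes}.

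The coefficient $A_k$ is where the real work lies, and I would compute it by logarithmic differentiation: $A_k=B_k\,\bigl(\log g_k\bigr)'\big|_{z=-(k+1)}$. Differentiating $\log g_k=2\log(-z)_{n-1}+\log(z-n+1)+\log(z-\rho n)-2\sum_{m\ne k+1}\log(z+m)$ and evaluating at $z=-(k+1)$, the two Pochhammer blocks telescope into harmonic numbers via $\sum_{i=0}^{n-2}(k+1+i)^{-1}=H_{n+k-1}-H_k$ and $\sum_{m\ne k+1}(m-k-1)^{-1}=H_{n-k}-H_k$, producing the combination $2H_k-H_{n+k-1}-H_{n-k}$ after factoring out the overall $2$ in $a_{k,n}^{(\rho)}=2b_{k,n}^{(\rho)}[\,\cdots]$. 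The two linear factors $(z-n+1)$ and $(z-\rho n)$ each contribute one further simple rational term, evaluating to multiples of $(n+k)^{-1}$ and $(k+\rho n+1)^{-1}$; placing these over a common denominator reproduces the term $\dfrac{(\rho+1)n+2k+1}{2(k+\rho n+1)(n+k)}$ of \eqref{bes}. The main obstacle is precisely this bookkeeping: tracking the signs coming from $\frac{d}{dz}(-z+i)=-1$ and from the reciprocals of the negative factors $m-k-1$, and collecting the harmonic sums correctly, since this is where an arithmetic slip is easiest and must be checked term by term.

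Finally, the case $i=2$ follows from $i=1$ with no new computation. Since $\mathcal{F}_{n,2}^{(\rho)}=\frac{d}{dz}\mathcal{F}_{n,1}^{(\rho)}$ by definition, and since differentiation under the integral sign is justified for $\operatorname{Re} z>-1$ by local uniform bounds on $\psi_{n,1}^{(\rho)}(x)x^z\log x$ over $[0,1]$, one obtains $\mathcal{F}_{n,2}^{(\rho)}(z)=\int_0^1\psi_{n,1}^{(\rho)}(x)x^z\log x\,dx=\int_0^1\psi_{n,2}^{(\rho)}(x)x^z\,dx$ using $\frac{d}{dz}x^z=x^z\log x$ and \eqref{Psis}, again extended to all $z$ by meromorphy. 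The assertion that $A_n^{(\rho)}$ and $B_n^{(\rho)}$ have degree exactly $n$ is then immediate from the nonvanishing of the leading coefficients, $b_{n,n}^{(\rho)}=\binom{2n}{n}^2\frac{(\rho+1)n+1}{2n}\ne0$ and the corresponding nonvanishing of $a_{n,n}^{(\rho)}$.
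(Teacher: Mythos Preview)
Your proposal is correct and follows essentially the same route as the paper: partial-fraction expansion of $\mathcal{F}_{n,1}^{(\rho)}$ with coefficients identified via residues, combined with the Mellin-type identity $\int_0^1 x^i\log^j x\,dx=(-1)^j j!/(i+1)^{j+1}$. Your use of logarithmic differentiation to extract $A_k$ makes explicit what the paper merely asserts (``which coincide with \eqref{bes}''), and your handling of $i=2$ by differentiating under the integral sign is a trivially equivalent variant of the paper's unified Kronecker-delta treatment of both cases at once.
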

Though the expression of the orthonogonality relation \eqref{R_Functions}
resembles that of Sorokin (1993) the 
the approximants of Sorokin coincide with Ap\'erys approximants,
whereas the new approximants \eqref{Poly1} do not.

\begin{proof}
	In fact, let us expand the functions $\mathcal{F}_{n,1}^{\left( \rho \right)
	}\left( z\right) $ and $\mathcal{F}_{n,2}^{\left( \rho \right) }\left(
	z\right) $ on the sum of partial fractions 
	\begin{equation}
	\mathcal{F}_{n,i}^{\left( \rho \right) }\left( z\right) =\ \left( -1\right)
	^{\delta _{i,2}}\sum_{0\leq k\leq n}\left( \fracd{\tilde{a}_{k,n}^{\left(
			\rho \right) }}{\left( z+k+1\right) ^{1+\delta _{i,2}}}+\fracd{2^{\delta
			_{i,2}}\tilde{b}_{k,n}^{\left( \rho \right) }}{\left( z+k+1\right) ^{\delta
			_{i,2}+2}}\right) ,  \label{SPF}
	\end{equation}%
	with $i=1,2$. 
	%
	%
	%
	%
	Clearly 
	\begin{equation*}
	\tilde{b}_{k,n}^{\left( \rho \right) }=\left( z+k+1\right) ^{2}\mathcal{F}%
	_{n,1}^{\left( \rho \right) }\left( z\right) \Big|_{z=-k-1}\quad \mbox{and}%
	\quad \tilde{a}_{k,n}^{\left( \rho \right) }=\mathop{\mathrm{Res}}_{z=-k-1}%
	\mathcal{F}_{n,1}^{\left( \rho \right) }\left( z\right) ,
	\end{equation*}%
	which coincide with $\left( \text{\ref{bes}}\right) $.
	Here, $\displaystyle\mathop{\mathrm{Res}}%
	_{z=z_{0}}f\left( z\right) $ denotes the residue of $f\left( z\right) $ at $%
	z=z_{0}$. In addition, applying the identity 
	\begin{equation}
	\fracd{\left( -1\right) ^{j}j!}{\left( i+1\right) ^{j+1}}=\int_{0}^{1}x^{i}%
	\log ^{j}x\text{ }dx,  \label{Int-log}
	\end{equation}%
	to $\left( \text{\ref{SPF}}\right) $ we have for $i=1,2$%
	\begin{equation*}
	\mathcal{F}_{n,i}^{\left( \rho \right) }\left( z\right) =\ \sum_{0\leq k\leq
		n}\int_{0}^{1}\left( a_{k,n}^{\left( \rho \right) }x^{z+k}\log ^{\delta
		_{i,2}}x-b_{k,n}^{\left( \rho \right) }x^{z+k}\log ^{1+\delta
		_{i,2}}x\right) \,dx.
	\end{equation*}%
	Hence%
	\begin{equation*}
	\mathcal{F}_{n,i}^{\left( \rho \right) }\left( z\right) =\ \int_{0}^{1}\
	\left( \sum_{0\leq k\leq n}a_{k,n}^{\left( \rho \right) }x^{k}-\log x\
	\sum_{0\leq k\leq n}b_{k,n}^{\left( \rho \right) }x^{k}\right) x^{z}\, \log ^{\delta_{i,2}}x\, dx,
	\end{equation*}%
	which completes the proof.
\end{proof}

For abbreviation we denote%
\begin{equation*}
\mathcal{R}_{n,i}^{\left( \rho \right) }\left( z\right) =\left(
-
\fracd{1}{2}
\right) ^{\delta _{i,2}}\sum_{j\geq 0}z^{-j-1}\mathcal{F}%
_{n,i}^{\left( \rho \right) }\left( j\right) .
\end{equation*}

\begin{lemma}
	\label{LRes}Let $\mathbb{P}_{n}$ be the $(n+1)$-dimensional subspace of the
	linear space $\mathbb{P}$ of polynomials with complex coefficients. Then,
	the following relations hold%
	\begin{eqnarray*}
		\mathcal{R}_{n,i}^{\left( \rho \right) }\left( z\right) &=&\left( -1\right)
		^{1+\delta _{i,2}}B_{n}^{\left( \rho \right) }\left( z\right) f_{\delta
			_{i,2}+2}\left( z\right) \\
		&&+\left( -2^{-1}\right) ^{\delta _{i,2}}A_{n}^{\left( \rho \right) }\left(
		z\right) f_{1+\delta _{i,2}}\left( z\right) -C_{n,i}^{\left( \rho \right)
		}\left( z\right) \\
		&=&\left( -2^{-1}\right) ^{\delta _{i,2}}\int_{0}^{1}\fracd{\psi
			_{n,i}^{\left( \rho \right) }\left( x\right) }{z-x}\,dx,\quad i=1,2,\quad
		n=0,1,\dots ,
	\end{eqnarray*}%
	where 
	\begin{equation}
	f_{j}\left( z\right) =\fracd{1}{(j-1)!}\int_{0}^{1}\fracd{\log ^{j-1}x}{z-x}%
	\,dx,\quad j\in \mathbb{N},  \label{fk(z)}
	\end{equation}%
	as well as 
	\begin{equation}
	C_{n,i}^{\left( \rho \right) }\left( z\right) =\left( -2^{-1}\right)
	^{\delta _{i,2}}\int_{0}^{1}\fracd{\psi _{n,i}^{\left( \rho \right) }\left(
		z\right) -\psi _{n,i}^{\left( \rho \right) }\left( x\right) }{z-x}\,dx,\quad
	C_{n,i}^{\left( \rho \right) }\left( z\right) \in \mathbb{P}_{n}.
	\label{Poly2}
	\end{equation}%
	By $\delta _{i,j}$ we denote the Kronecker delta function.
\end{lemma}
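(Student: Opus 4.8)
The plan is to prove the two displayed identities in the reverse of the order written: first the integral representation, which is an immediate consequence of Lemma~\ref{ILemm}, and then the closed form in terms of $f_j$, $A_n^{(\rho)}$, $B_n^{(\rho)}$ and $C_{n,i}^{(\rho)}$, which I extract from the integral by a divided-difference argument. For the integral representation I substitute $\mathcal{F}_{n,i}^{(\rho)}(j)=\int_0^1\psi_{n,i}^{(\rho)}(x)\,x^j\,dx$ from Lemma~\ref{ILemm} into the definition $\mathcal{R}_{n,i}^{(\rho)}(z)=(-\tfrac12)^{\delta_{i,2}}\sum_{j\ge0}z^{-j-1}\mathcal{F}_{n,i}^{(\rho)}(j)$. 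For $|z|>1$ the resulting double series converges absolutely, since $x\in[0,1]$ and $\psi_{n,i}^{(\rho)}$ is integrable on $(0,1)$; hence Fubini's theorem permits interchanging summation and integration, and summing the geometric series $\sum_{j\ge0}x^jz^{-j-1}=(z-x)^{-1}$ yields $\mathcal{R}_{n,i}^{(\rho)}(z)=(-2^{-1})^{\delta_{i,2}}\int_0^1\psi_{n,i}^{(\rho)}(x)(z-x)^{-1}\,dx$. This is the second equality, and it continues analytically to all $z\notin[0,1]$.

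For the closed form I use \eqref{Psis} to write $\psi_{n,i}^{(\rho)}(x)=A_n^{(\rho)}(x)\log^{\delta_{i,2}}x-B_n^{(\rho)}(x)\log^{1+\delta_{i,2}}x$, and in each of the two integrals split the polynomial as $A_n^{(\rho)}(x)=A_n^{(\rho)}(z)-\bigl(A_n^{(\rho)}(z)-A_n^{(\rho)}(x)\bigr)$, and likewise for $B_n^{(\rho)}$. The two frozen contributions are
\[
A_n^{(\rho)}(z)\int_0^1\frac{\log^{\delta_{i,2}}x}{z-x}\,dx-B_n^{(\rho)}(z)\int_0^1\frac{\log^{1+\delta_{i,2}}x}{z-x}\,dx,
\]
which by \eqref{fk(z)} equal $A_n^{(\rho)}(z)f_{1+\delta_{i,2}}(z)-(1+\delta_{i,2})!\,B_n^{(\rho)}(z)f_{2+\delta_{i,2}}(z)$ (recall $\delta_{i,2}!=1$). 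Multiplying by $(-2^{-1})^{\delta_{i,2}}$ and invoking the elementary identity $-(-2^{-1})^{\delta_{i,2}}(1+\delta_{i,2})!=(-1)^{1+\delta_{i,2}}$, valid for $\delta_{i,2}\in\{0,1\}$, reproduces exactly the coefficient $(-2^{-1})^{\delta_{i,2}}$ of $A_n^{(\rho)}f_{1+\delta_{i,2}}$ and the coefficient $(-1)^{1+\delta_{i,2}}$ of $B_n^{(\rho)}f_{2+\delta_{i,2}}$ in the statement.

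It remains to identify the remainder. The leftover terms contribute
\[
-(-2^{-1})^{\delta_{i,2}}\int_0^1\frac{\bigl(A_n^{(\rho)}(z)-A_n^{(\rho)}(x)\bigr)\log^{\delta_{i,2}}x-\bigl(B_n^{(\rho)}(z)-B_n^{(\rho)}(x)\bigr)\log^{1+\delta_{i,2}}x}{z-x}\,dx,
\]
that is, $-C_{n,i}^{(\rho)}(z)$. Since $A_n^{(\rho)}$ and $B_n^{(\rho)}$ have degree $n$, the divided differences $(A_n^{(\rho)}(z)-A_n^{(\rho)}(x))/(z-x)$ and $(B_n^{(\rho)}(z)-B_n^{(\rho)}(x))/(z-x)$ are polynomials in $z$ of degree at most $n-1$; integrating these against the bounded logarithmic weights over $x\in(0,1)$ preserves polynomiality, so $C_{n,i}^{(\rho)}(z)$ is a polynomial of degree at most $n-1$. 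This both yields the divided-difference form \eqref{Poly2} and shows $C_{n,i}^{(\rho)}(z)\in\mathbb{P}_n$, completing the argument.

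The delicate point is the logarithmic bookkeeping in the closed-form step: one must verify that every genuinely transcendental contribution is absorbed into the functions $f_{1+\delta_{i,2}}$ and $f_{2+\delta_{i,2}}$, so that what remains is a true polynomial rather than a polynomial-plus-dilogarithmic expression. Handling the two distinct log-powers $\delta_{i,2}$ and $1+\delta_{i,2}$ simultaneously for $i=1,2$, together with the $(-2^{-1})^{\delta_{i,2}}$ normalisation and the factorial $(1+\delta_{i,2})!$ produced by \eqref{fk(z)}, is where the computation must be carried out with care; by contrast, the interchange of summation and integration in the first step is routine and need only be recorded.
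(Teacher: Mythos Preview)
Your proof is correct. The route differs from the paper's in its ordering and in how the series is connected to the integral. The paper starts from the partial-fraction expansion \eqref{SPF}, plugs it into the defining series for $\mathcal{R}_{n,i}^{(\rho)}(z)$, and manipulates the double sum directly (interchanging sums, splitting $\sum_{l\ge k+1}=\sum_{l\ge1}-\sum_{1\le l\le k}$, then invoking $\sum_{l\ge1}z^{-l}/l^{j}=(-1)^{j-1}f_j(z)$) to reach the first equality; only afterwards does it verify the integral form by the same divided-difference decomposition you use. You instead invoke Lemma~\ref{ILemm} at the outset, sum the geometric series under the integral via Fubini to obtain the Cauchy-type integral immediately, and then split off the frozen polynomial values to extract $f_{1+\delta_{i,2}}$, $f_{2+\delta_{i,2}}$ and $C_{n,i}^{(\rho)}$. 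Your approach is shorter and avoids the explicit harmonic-sum bookkeeping, at the cost of hiding the concrete coefficient formula for $C_{n,i}^{(\rho)}$ that the paper's series manipulation makes visible (and which is used in Corollary~\ref{cor}); the paper's approach, conversely, makes the polynomial $C_{n,i}^{(\rho)}(1)=\sum_k(b_{k,n}^{(\rho)}H_k^{2+\delta_{i,2}}+2^{-\delta_{i,2}}a_{k,n}^{(\rho)}H_k^{1+\delta_{i,2}})$ explicit but is computationally heavier.
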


\begin{proof}
	In fact, from $\left( \text{\ref{SPF}}\right) $ we get 
	\begin{multline*}
	\mathcal{R}_{n,i}^{\left( \rho \right) }\left( z\right) =\sum_{j\geq
		0}z^{-j-1}\sum_{0\leq k\leq n}\fracd{b_{k,n}^{\left( \rho \right) }}{\left(
		j+k+1\right) ^{\delta _{i,2}+2}} \\
	+2^{-\delta _{i,2}}\sum_{j\geq 0}z^{-j-1}\sum_{0\leq k\leq n}\fracd{%
		a_{k,n}^{\left( \rho \right) }}{\left( j+k+1\right) ^{1+\delta _{i,2}}}.
	\end{multline*}%
	Next, interchanging the sums we have%
	\begin{eqnarray*}
		\mathcal{R}_{n,i}^{\left( \rho \right) }\left( z\right) &=&\sum_{0\leq k\leq
			n}b_{k,n}^{\left( \rho \right) }z^{k}\sum_{j\geq 0}\fracd{z^{-\left(
				j+k+1\right) }}{\left( j+k+1\right) ^{2+\delta _{i,2}}} \\
		&&+2^{-\delta _{i,2}}\sum_{0\leq k\leq n}a_{k,n}^{\left( \rho \right)
		}z^{k}\sum_{j\geq 0}\fracd{z^{-\left( j+k+1\right) }}{\left( j+k+1\right)
			^{1+\delta _{i,2}}} \\
		&=&\sum_{0\leq k\leq n}b_{k,n}^{\left( \rho \right) }z^{k}\sum_{l\geq k+1}%
		\fracd{z^{-l}}{l^{2+\delta _{i,2}}}+2^{-\delta _{i,2}}\sum_{0\leq k\leq
			n}a_{k,n}^{\left( \rho \right) }z^{k}\sum_{l\geq k+1}\fracd{z^{-l}}{%
			l^{1+\delta _{i,2}}}.
	\end{eqnarray*}%
	Splitting the sum over $l$ as%
	\begin{equation*}
	\sum_{l\geq k+1}f\left( l\right) =\sum_{l\geq 1}f\left( l\right)
	-\sum_{1\leq l\leq k}f\left( l\right) =\left( \sum_{l\geq 1}-\sum_{1\leq
		l\leq k}\right) f\left( l\right) ,
	\end{equation*}%
	we deduce%
	\begin{multline*}
	\mathcal{R}_{n,i}^{\left( \rho \right) }\left( z\right) =\sum_{0\leq k\leq
		n}b_{k,n}^{\left( \rho \right) }z^{k}\left( \sum_{l\geq 1}-\sum_{1\leq l\leq
		k}\right) \fracd{z^{-l}}{l^{2+\delta _{i,2}}} \\
	+2^{-\delta _{i,2}}\sum_{0\leq k\leq n}a_{k,n}^{\left( \rho \right)
	}z^{k}\left( \sum_{l\geq 1}-\sum_{1\leq l\leq k}\right) \fracd{z^{-l}}{%
		l^{1+\delta _{i,2}}}.
	\end{multline*}%
	Evidently%
	\begin{multline*}
	\mathcal{R}_{n}^{\left( \rho \right) }\left( z\right) =\sum_{0\leq k\leq
		n}b_{k,n}^{\left( \rho \right) }z^{k}\sum_{l\geq 1}\fracd{z^{-l}}{l^{2+\delta
			_{i,2}}}+2^{-\delta _{i,2}}\sum_{0\leq k\leq n}a_{k,n}^{\left( \rho \right)
	}z^{k}\sum_{l\geq 1}\fracd{z^{-l}}{l^{1+\delta _{i,2}}} \\
	-\sum_{1\leq k\leq n}b_{k,n}^{\left( \rho \right) }z^{k}\sum_{1\leq l\leq k}%
	\fracd{z^{-l}}{l^{2+\delta _{i,2}}}-2^{-\delta _{i,2}}\sum_{1\leq k\leq
		n}a_{k,n}^{\left( \rho \right) }z^{k}\sum_{1\leq l\leq k}\fracd{z^{-l}}{%
		l^{1+\delta _{i,2}}}.
	\end{multline*}%
	Clearly, from%
	\begin{equation*}
	\sum_{n\geq 1}\fracd{z^{-n}}{n^{j}}=\left( -1\right) ^{j-1}f_{j}\left(
	z\right) ,
	\end{equation*}%
	we have%
	\begin{multline}
	\mathcal{R}_{n,i}^{\left( \rho \right) }\left( z\right) =\left( -1\right)
	^{1+\delta _{i,2}}B_{n}^{\left( \rho \right) }\left( z\right) f_{2+\delta
		_{i,2}}\left( z\right) +\left( -2^{-1}\right) ^{\delta _{i,2}}A_{n}^{\left(
		\rho \right) }\left( z\right) f_{1+\delta _{i,2}}\left( z\right) \\
	-\sum_{1\leq k\leq n}b_{k,n}^{\left( \rho \right) }z^{k}\sum_{1\leq l\leq k}%
	\fracd{z^{-l}}{l^{2+\delta _{i,2}}}-2^{-\delta _{i,2}}\sum_{1\leq k\leq
		n}a_{k,n}^{\left( \rho \right) }z^{k}\sum_{1\leq l\leq k}\fracd{z^{-l}}{%
		l^{1+\delta _{i,2}}}.  \label{RP}
	\end{multline}%
	Then, using $\left( \text{\ref{Int-log}}\right) $ as well as%
	\begin{multline*}
	\int_{0}^{1}\fracd{A_{n}^{\left( \rho \right) }\left( z\right) -A_{n}^{\left(
			\rho \right) }\left( x\right) }{z-x}\log ^{\delta _{i,2}}x\,dx 
	=
	\sum_{\substack{0\leq k\leq n \\ 0\leq j\leq k-1}}
	%
	a_{k,n}^{\left( \rho \right) }
	%
	z^{k-j-1}\int_{0}^{1}x^{j}\log ^{\delta _{i,2}}x\,dx,
	\end{multline*}%
	and%
	\begin{multline*}
	\int_{0}^{1}\fracd{B_{n}^{\left( \rho \right) }\left( z\right) -B_{n}^{\left(
			\rho \right) }\left( x\right) }{z-x}\log ^{1+\delta _{i,2}}x\,dx 
	=
	\sum_{\substack{0\leq k\leq n \\ 0\leq j\leq k-1}}
	b_{k,n}^{\left( \rho \right) }
	z^{k-j-1}\int_{0}^{1}x^{j}\log^{1+\delta _{i,2}} x\,dx,
	\end{multline*}%
	we deduce%
	\begin{multline}
	-\sum_{1\leq k\leq n}b_{k,n}^{\left( \rho \right) }z^{k}\sum_{1\leq l\leq k}%
	\fracd{z^{-l}}{l^{2+\delta _{i,2}}}-2^{-\delta _{i,2}}\sum_{1\leq k\leq
		n}a_{k,n}^{\left( \rho \right) }z^{k}\sum_{1\leq l\leq k}\fracd{z^{-l}}{%
		l^{1+\delta _{i,2}}} \\
	=\left( -2^{-1}\right) ^{\delta _{i,2}}\int_{0}^{1}\fracd{\psi _{n,i}^{\left(
			\rho \right) }\left( x\right) -\psi _{n,i}^{\left( \rho \right) }\left(
		z\right) }{z-x}\,dx.  \label{RIO}
	\end{multline}%
	Therefore, substituting the above in $\left( \text{\ref{RP}}\right) $ we
	arrived at the first equality. Next, let us prove the second equality.
	According to $\left( \text{\ref{Psis}}\right) $ and $\left( \text{\ref{fk(z)}%
	}\right) $ we have%
	\begin{multline*}
	-2^{-1}\int_{0}^{1}\fracd{\psi _{n,2}^{\left( \rho \right) }\left( x\right) }{%
		z-x}dx=2^{-1}\int_{0}^{1}\fracd{A_{n}^{\left( \rho \right) }\left( z\right)
		-A_{n}^{\left( \rho \right) }\left( x\right) }{z-x}\log x\,dx \\
	-2^{-1}\int_{0}^{1}\fracd{B_{n}^{\left( \rho \right) }\left( z\right)
		-B_{n}^{\left( \rho \right) }\left( x\right) }{z-x}\log ^{2}x\,dx \\
	-2^{-1}A_{n}^{\left( \rho \right) }\left( z\right) f_{2}\left( z\right)
	+B_{n}^{\left( \rho \right) }\left( z\right) f_{3}\left( z\right) \\
	=2^{-1}\int_{0}^{1}\fracd{\psi _{n,2}^{\left( \rho \right) }\left( z\right)
		-\psi _{n,2}^{\left( \rho \right) }\left( x\right) }{z-x}\,dx-2^{-1}A_{n}^{%
		\left( \rho \right) }\left( z\right) f_{2}\left( z\right) +B_{n}^{\left(
		\rho \right) }\left( z\right) f_{3}\left( z\right) ,
	\end{multline*}%
	and%
	\begin{multline*}
	\int_{0}^{1}\fracd{\psi _{n,1}^{\left( \rho \right) }\left( x\right) }{z-x}%
	dx=-\int_{0}^{1}\fracd{A_{n}^{\left( \rho \right) }\left( z\right)
		-A_{n}^{\left( \rho \right) }\left( x\right) }{z-x}\,dx \\
	+\int_{0}^{1}\fracd{B_{n}^{\left( \rho \right) }\left( z\right)
		-B_{n}^{\left( \rho \right) }\left( x\right) }{z-x}\log x\,dx \\
	+A_{n}^{\left( \rho \right) }\left( z\right) f_{1}\left( z\right)
	-B_{n}^{\left( \rho \right) }\left( z\right) f_{2}\left( z\right) \\
	=\int_{0}^{1}\fracd{\psi _{n,1}^{\left( \rho \right) }\left( x\right) -\psi
		_{n,1}^{\left( \rho \right) }\left( z\right) }{z-x}\,dx+A_{n}^{\left( \rho
		\right) }\left( z\right) f_{1}\left( z\right) -B_{n}^{\left( \rho \right)
	}\left( z\right) f_{2}\left( z\right) .
	\end{multline*}%
	Thus, taking $\left( \text{\ref{RP}}\right) $ and $\left( \text{\ref{RIO}}%
	\right) $ into account, we obtain the desired result.
\end{proof}

Notice that, using the identity%
\begin{equation*}
\fracd{1}{z-x}=\sum_{0\leq j\leq n-1-\delta _{i,2}}\fracd{x^{j}}{z^{j+1}}+%
\fracd{x^{n-\delta _{i,2}}}{z^{n-\delta _{i,2}}}\fracd{1}{z-x},\quad i=1,2,
\end{equation*}%
as well as the previous lemma we have for $i=1,2$ the following%
\begin{multline*}
\left( -2\right) ^{\delta _{i,2}}\mathcal{R}_{n,i}^{\left( \rho \right)
}\left( z\right) \\
=\sum_{0\leq j\leq n-1-\delta _{i,2}}\fracd{1}{z^{k+1}}\int_{0}^{1}\psi
_{n,i}^{\left( \rho \right) }\left( x\right) x^{j}\,dx+\fracd{1}{z^{n-\delta
		_{i,2}}}\int_{0}^{1}\fracd{x^{n-\delta _{i,2}}\psi _{n,i}^{\left( \rho
		\right) }\left( x\right) }{z-x}\,dx \\
=\sum_{0\leq k\leq n-1-\delta _{i,2}}\fracd{1}{z^{j+1}}\int_{0}^{1}\psi
_{n,i}^{\left( \rho \right) }\left( x\right) x^{j}\,dx+\mathcal{O}\left(
z^{-n-\delta _{i,1}}\right) .
\end{multline*}%
Next, taking Lemma \ref{ILemm} into account, as well as the zeros of the
rational function $\left( \text{\ref{NNest}}\right) $, we infer the
following orthogonal conditions for $i=1,2$,%
\begin{equation}
\displaystyle\int_{0}^{1}\psi _{n,i}^{\left( \rho \right) }\left( x\right)
x^{j}\,dx=0,\quad j=0,\ldots ,n-\delta _{i,2}-1,  \label{Orth_Cs}
\end{equation}%
from which we see that $\mathcal{R}_{n,i}^{\left( \rho \right) }\left(
z\right) =\mathcal{O}\left( z^{-n-\delta _{i,1}}\right) $ for $i=1,2$.
Moreover, since $\mathcal{F}_{n,1}^{\left( \rho \right) }\left( z\right) =%
\mathcal{O}\left( z^{-2}\right) $ as $z\rightarrow \infty $, we deduce%
\begin{equation}
A_{n}^{\left( \rho \right) }\left( 1\right) =\sum_{0\leq k\leq n}%
\mathop{\mathrm{Res}}_{z=-k-1}\mathcal{F}_{n,1}^{\left( \rho \right) }\left(
z\right) =-\mathop{\mathrm{Res}}_{z=\infty }\mathcal{F}_{n,1}^{\left( \rho
	\right) }\left( z\right) =0.  \label{Azero}
\end{equation}%
Having in mind all the above results, we observe that the functions $\left( 
\text{\ref{R_Functions}}\right) $ and the polynomials $\left( \text{\ref%
	{Poly1}}\right) $ and $\left( \text{\ref{Poly2}}\right) $ are connected to
the following Hermite-Pad\'e approximation problem%
\begin{eqnarray*}
	\tilde{B}_{n}^{\left( \rho ,i\right) }\left( z\right) f_{\delta
		_{i,2}+2}\left( z\right) +\tilde{A}_{n}^{\left( \rho ,i\right) }\left(
	z\right) f_{1+\delta _{i,2}}\left( z\right) -C_{n,i}^{\left( \rho \right)
	}\left( z\right) &=&\mathcal{O}\left( z^{-n-\delta _{i,1}}\right) , \\
	A_{n}^{\left( \rho \right) }\left( 1\right) &=&0,
\end{eqnarray*}%
where $i=1,2$, $\tilde{B}_{n}^{\left( \rho ,i\right) }\left( z\right)
=\left( -1\right) ^{1+\delta _{i,2}}B_{n}^{\left( \rho \right) }\left(
z\right) $ and $\tilde{A}_{n}^{\left( \rho ,i\right) }\left( z\right)
=\left( -2^{-1}\right) ^{\delta _{i,2}}A_{n}^{\left( \rho \right) }\left(
z\right) $.

From the Hermite-Pad\'e approximation problem of the Lemma %
\ref{LRes} we can deduce Corollary \ref{cor}.

\begin{corollary} \label{cor}
	Let $n\geq 1$, then the following relation 
	\begin{equation}
	\mathcal{R}_{n,2}^{\left( \rho \right) }\left( 1\right) =B_{n}^{\left( \rho
		\right) }\left( 1\right) \zeta \left( 3\right) -C_{n,2}^{\left( \rho \right)
	}\left( 1\right) =-2^{-1}\int_{0}^{1}\fracd{\psi _{n,2}^{\left( \rho \right)
		}\left( x\right) }{1-x}dx,  \label{ResZIII1}
	\end{equation}%
	holds, where 
	\begin{equation*}
	C_{n,2}^{\left( \rho \right) }\left( 1\right) =\sum_{1\leq k\leq n}\left(
	b_{k,n}^{\left( \rho \right) }H_{k}^{3}+2^{-1}a_{k,n}^{\left( \rho \right)
	}H_{k}^{2}\right) .
	\end{equation*}
\end{corollary}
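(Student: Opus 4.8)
The plan is to specialize the first identity of Lemma~\ref{LRes} to $i=2$ and then evaluate it at $z=1$. Since $\delta_{2,2}=1$, that identity reads $\mathcal{R}_{n,2}^{\left(\rho\right)}\left(z\right)=B_{n}^{\left(\rho\right)}\left(z\right)f_{3}\left(z\right)-2^{-1}A_{n}^{\left(\rho\right)}\left(z\right)f_{2}\left(z\right)-C_{n,2}^{\left(\rho\right)}\left(z\right)$. Setting $z=1$, the key observation is that the term carrying $f_{2}\left(1\right)$ vanishes: by $\eqref{Azero}$ we have $A_{n}^{\left(\rho\right)}\left(1\right)=0$, so the only source of a $f_{2}$-type (dilogarithmic) constant is killed. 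This is precisely the structural fact that turns $\mathcal{R}_{n,2}^{\left(\rho\right)}\left(1\right)$ into an honest linear form in $\zeta\left(3\right)$ with rational coefficients, and I would flag the condition $A_{n}^{\left(\rho\right)}\left(1\right)=0$ as the one genuinely nonroutine ingredient.

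Next I would identify $f_{3}\left(1\right)=\zeta\left(3\right)$. By $\eqref{fk(z)}$ one has $f_{3}\left(1\right)=\frac{1}{2}\int_{0}^{1}\frac{\log^{2}x}{1-x}\,dx$, which is exactly the $k=3$ instance of the integral representation $\zeta\left(k\right)=\frac{\left(-1\right)^{k-1}}{\left(k-1\right)!}\int_{0}^{1}\frac{\log^{k-1}x}{1-x}\,dx$ recorded in the Introduction. Substituting $A_{n}^{\left(\rho\right)}\left(1\right)=0$ and $f_{3}\left(1\right)=\zeta\left(3\right)$ into the specialized identity yields the first equality $\mathcal{R}_{n,2}^{\left(\rho\right)}\left(1\right)=B_{n}^{\left(\rho\right)}\left(1\right)\zeta\left(3\right)-C_{n,2}^{\left(\rho\right)}\left(1\right)$. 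The second equality is then immediate from the integral form of $\mathcal{R}_{n,2}^{\left(\rho\right)}$ in Lemma~\ref{LRes} (again using $\delta_{2,2}=1$), evaluated at $z=1$, which gives the factor $\left(-2^{-1}\right)$ in front of the integral $\int_{0}^{1}\frac{\psi_{n,2}^{\left(\rho\right)}\left(x\right)}{1-x}\,dx$.

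It remains to produce the closed form of $C_{n,2}^{\left(\rho\right)}\left(1\right)$. For this I would return to the explicit polynomial exhibited inside the proof of Lemma~\ref{LRes}: comparing $\eqref{RP}$ (through $\eqref{RIO}$) with the first identity of that lemma identifies, for $i=2$, the polynomial $C_{n,2}^{\left(\rho\right)}\left(z\right)=\sum_{1\leq k\leq n}b_{k,n}^{\left(\rho\right)}z^{k}\sum_{1\leq l\leq k}\frac{z^{-l}}{l^{3}}+2^{-1}\sum_{1\leq k\leq n}a_{k,n}^{\left(\rho\right)}z^{k}\sum_{1\leq l\leq k}\frac{z^{-l}}{l^{2}}$. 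Evaluating at $z=1$ collapses the inner sums to harmonic numbers, $\sum_{1\leq l\leq k}l^{-3}=H_{k}^{\left(3\right)}$ and $\sum_{1\leq l\leq k}l^{-2}=H_{k}^{\left(2\right)}$, giving $C_{n,2}^{\left(\rho\right)}\left(1\right)=\sum_{1\leq k\leq n}\bigl(b_{k,n}^{\left(\rho\right)}H_{k}^{\left(3\right)}+2^{-1}a_{k,n}^{\left(\rho\right)}H_{k}^{\left(2\right)}\bigr)$, as claimed. No real obstacle arises beyond this bookkeeping; the whole content of the statement is carried by the vanishing $A_{n}^{\left(\rho\right)}\left(1\right)=0$ together with the classical integral for $\zeta\left(3\right)$.
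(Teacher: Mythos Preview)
Your proposal is correct and follows exactly the approach the paper indicates: the corollary is obtained by specializing Lemma~\ref{LRes} (case $i=2$) to $z=1$, using $A_{n}^{(\rho)}(1)=0$ from \eqref{Azero} to eliminate the $f_{2}$-term, identifying $f_{3}(1)=\zeta(3)$ via the integral in the Introduction, and reading off $C_{n,2}^{(\rho)}(1)$ from the explicit expression \eqref{RP}. The paper itself only states that the corollary follows from the Hermite--Pad\'e problem at $z=1$ without spelling out these steps, so your write-up is simply a fleshed-out version of the same argument.
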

This corollary is a specific case of the Hermite-Pad\'e problem where $z=1$,
where
$B_{n}^{\left( \rho \right) }\left( 1\right)$ and $-C_{n,2}^{\left( \rho \right)}\left( 1 \right)$ 
are the denominators and numerators of the rational approximants of $\zeta(3)$, respectively,
and the $\mathcal{R}_{n,2}^{\left( \rho \right) }\left( 1\right)$ are the residuals.


\section{Main results\label{SeccRR}}

In this Section the main results of this contribution are stated. We present a new recurrence relation as well
as a new continued fraction expansion and a new series expansions for $\zeta \left( 3\right) $, which depends on one single integer parameter.

With the abbreviations
\begin{equation}
\begin{array}{c}
\left( r_{n}^{\left( \rho \right) }\right) _{n\geq 1}=\left\{ \mathcal{R}%
_{n,2}^{\left( \rho \right) }\left( 1\right) \right\} _{n\geq 1},\quad
\left( q_{n}^{\left( \rho \right) }\right) _{n\geq 1}=\left\{ B_{n}^{\left(
	\rho \right) }\left( 1\right) \right\} _{n\geq 1}, \\ 
\\ 
\mbox{and}\quad \left( p_{n}^{\left( \rho \right) }\right) _{n\geq
	1}=\left\{ C_{n,2}^{\left( \rho \right) }\left( 1\right) \right\} _{n\geq 1},
\end{array}
\label{AppN}
\end{equation}%
equation $\left( \text{\ref{ResZIII1}}\right) $ can be rewritten as%
\begin{equation}
r_{n}^{\left( \rho \right) }=q_{n}^{\left( \rho \right) }\zeta \left(
3\right) -p_{n}^{\left( \rho \right) }.  \label{ResZIII2}
\end{equation}%
According to $\left( \text{\ref{ResZIII2}}\right) $ we deduce that%
\begin{equation}
p_{n}^{\left( \rho \right) }q_{n+1}^{\left( \rho \right) }-p_{n+1}^{\left(
	\rho \right) }q_{n}^{\left( \rho \right) }=q_{n}^{\left( \rho \right)
}r_{n+1}^{\left( \rho \right) }-q_{n+1}^{\left( \rho \right) }r_{n}^{\left(
	\rho \right) }.  \label{Rpnqnrn}
\end{equation}%
Notice that, as a consequence of Lemma \ref{LRes} and the orthogonality
conditions $\left( \text{\ref{Orth_Cs}}\right)$ we have%
\begin{equation}
\begin{array}{c}
\displaystyle\int_{0}^{1}\fracd{P_{n-1}\left( x\right) \psi _{n,1}^{\left(
		\rho \right) }\left( x\right) }{1-x}dx=P_{n-1}\left( 1\right) \int_{0}^{1}%
\fracd{\psi _{n,1}^{\left( \rho \right) }\left( x\right) }{1-x}dx, \\ 
\\ 
\displaystyle P_{n-1}\left( 1\right) r_{n}^{\left( \rho \right)
}=-2^{-1}\int_{0}^{1}\fracd{P_{n-1}\left( x\right) \psi _{n,2}^{\left( \rho
		\right) }\left( x\right) }{1-x}dx,%
\end{array}
\label{Cons}
\end{equation}%
where $P_{n-1}\left( x\right) $ is an arbitrary polynomial of degree at most 
$n-1$.

\begin{lemma}
	\label{LMRE}Let $\mathcal{F}_{n,1}^{\left( \rho \right) }\left( z\right) $
	be the rational function defined by $\left( \text{\ref{NNest}}\right) $.
	Then, the following relations hold%
	\begin{equation*}
	\mathcal{F}_{n,2}^{\left( \rho \right) }\left( n-1\right) =\fracd{\left( \rho
		n-n+1\right) \left( n-1\right) !^{4}}{\left( 2n\right) !^{2}},
	\end{equation*}%
	\begin{equation*}
	\mathcal{F}_{n,2}^{\left( \rho \right) }\left( n\right) =-\fracd{2n\left(
		\rho -1\right) n!^{2}}{\left( n+1\right) _{n+1}^{2}}\left( 2H_{n}-H_{2n+1}-%
	\fracd{\rho n-n+1}{2n\left( \rho -1\right) }\right) ,
	\end{equation*}%
	and%
	\begin{equation*}
	\mathcal{F}_{n,1}^{\left( \rho \right) }\left( n\right) =-\fracd{n\left( \rho
		-1\right) n!^{2}}{\left( n+1\right) _{n+1}^{2}},
	\end{equation*}
\end{lemma}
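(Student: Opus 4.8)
The plan is to read off all three values directly from the explicit form of the Nesterenko-like rational function $\mathcal{F}_{n,1}^{\left(\rho\right)}$ in \eqref{NNest} and of its derivative $\mathcal{F}_{n,2}^{\left(\rho\right)}$, the whole argument being a matter of substituting the special arguments $z=n$ and $z=n-1$ and reducing the resulting Pochhammer symbols to factorials. First I would evaluate the four factors of $\mathcal{F}_{n,1}^{\left(\rho\right)}\left(z\right)=\fracd{\left(-z\right)_{n-1}^{2}\left(z-n+1\right)\left(z-\rho n\right)}{\left(z+1\right)_{n+1}^{2}}$ at $z=n$: one has $\left(-n\right)_{n-1}=\left(-1\right)^{n-1}n!$, hence $\left(-n\right)_{n-1}^{2}=n!^{2}$, while $\left(z-n+1\right)\big|_{z=n}=1$, $\left(z-\rho n\right)\big|_{z=n}=-n\left(\rho-1\right)$, and $\left(n+1\right)_{n+1}=\left(n+1\right)\cdots\left(2n+1\right)$. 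Substituting these into \eqref{NNest} yields the third identity immediately.

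For $\mathcal{F}_{n,2}^{\left(\rho\right)}\left(n\right)$ I would note that $\mathcal{F}_{n,1}^{\left(\rho\right)}\left(n\right)\neq0$, so the derivative is conveniently expressed through the logarithmic derivative, obtained by differentiating $\log\mathcal{F}_{n,1}^{\left(\rho\right)}$ factor by factor exactly as in the formula for $\tfrac{d}{dz}\mathcal{N}_{n}$ recalled before \eqref{CIntegral}, namely
\begin{equation*}
\mathcal{F}_{n,2}^{\left(\rho\right)}\left(z\right)=\mathcal{F}_{n,1}^{\left(\rho\right)}\left(z\right)\left(2\sum_{0\leq j\leq n-2}\fracd{1}{z-j}+\fracd{1}{z-n+1}+\fracd{1}{z-\rho n}-2\sum_{1\leq k\leq n+1}\fracd{1}{z+k}\right).
\end{equation*}
At $z=n$ the two finite sums telescope into harmonic numbers, $\sum_{0\leq j\leq n-2}\tfrac{1}{n-j}=H_{n}-1$ and $\sum_{1\leq k\leq n+1}\tfrac{1}{n+k}=H_{2n+1}-H_{n}$, while the two isolated terms give $1$ and $-\tfrac{1}{n\left(\rho-1\right)}$. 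Multiplying the bracket by the value of $\mathcal{F}_{n,1}^{\left(\rho\right)}\left(n\right)$ found above and collecting the constants by means of $\tfrac{1}{2}+\tfrac{1}{2n\left(\rho-1\right)}=\fracd{\rho n-n+1}{2n\left(\rho-1\right)}$ produces the second identity.

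For $\mathcal{F}_{n,2}^{\left(\rho\right)}\left(n-1\right)$ I would use that $z=n-1$ is a simple zero of $\mathcal{F}_{n,1}^{\left(\rho\right)}$, since the factor $\left(z-n+1\right)$ vanishes there whereas all the remaining factors are regular and nonzero. Writing $\mathcal{F}_{n,1}^{\left(\rho\right)}\left(z\right)=\left(z-n+1\right)g\left(z\right)$ with $g\left(z\right)=\fracd{\left(-z\right)_{n-1}^{2}\left(z-\rho n\right)}{\left(z+1\right)_{n+1}^{2}}$, one gets $\mathcal{F}_{n,2}^{\left(\rho\right)}\left(z\right)=g\left(z\right)+\left(z-n+1\right)g'\left(z\right)$, so that $\mathcal{F}_{n,2}^{\left(\rho\right)}\left(n-1\right)=g\left(n-1\right)$ because the second summand vanishes at the zero. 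Evaluating $g$ at $z=n-1$ with $\left(-(n-1)\right)_{n-1}=\left(-1\right)^{n-1}\left(n-1\right)!$ and $\left(n\right)_{n+1}=\fracd{\left(2n\right)!}{\left(n-1\right)!}$ then reduces $g\left(n-1\right)$ to the closed form announced in the statement.

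All three computations are elementary, and I expect the only delicate point to be the bookkeeping: correctly reducing each Pochhammer symbol at its integer argument (tracking the powers of $-1$ and the factorial normalisations), telescoping the two harmonic sums in the logarithmic derivative, and recombining the leftover constants into the single fraction $\fracd{\rho n-n+1}{2n\left(\rho-1\right)}$ that appears in the second identity.
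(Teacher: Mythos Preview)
Your proposal is correct and follows exactly the route the paper indicates: the paper's own proof consists of the single remark that ``it is enough to evaluate $\mathcal{F}_{n,2}^{(\rho)}$ at $n-1$ and $n$'' and that ``the desired result follows from a tedious but straightforward verification,'' which is precisely the direct substitution-and-reduction you carry out. Your use of the logarithmic derivative at $z=n$ and the simple-zero factorisation $\mathcal{F}_{n,1}^{(\rho)}(z)=(z-n+1)g(z)$ at $z=n-1$ is the natural way to organise that verification and supplies the details the paper omits.
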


\begin{proof}
	To prove the Lemma it is enough to evaluate $\mathcal{F}_{n,2}^{\left( \rho
		\right) }\left( n\right) $ at $n-1$ and $n$. Indeed, the desired result
	follows from a tedious but straightforward verification.
\end{proof}

\begin{lemma}
	\label{DCasar}The sequences $\left( p_{n}^{\left( \rho \right) }\right)
	_{n\geq 1}$ and $\left( q_{n}^{\left( \rho \right) }\right) _{n\geq 1}$
	defined by $\left( \text{\ref{AppN}}\right) $ satisfy the following relation 
	\begin{equation}
	\det 
	\begin{pmatrix}
	p_{n}^{\left( \rho \right) } & q_{n}^{\left( \rho \right) } \\ 
	p_{n+1}^{\left( \rho \right) } & q_{n+1}^{\left( \rho \right) }%
	\end{pmatrix}%
	=-\fracd{\Phi _{n}^{\left( \rho \right) }}{2n^{4}\left( n+1\right) ^{4}}%
	,\quad \rho \in \mathbb{N},\quad n\geq 1,  \label{eqq_1}
	\end{equation}%
	where 
	\begin{multline}
	\Phi _{n}^{\left( \rho \right) }=24n^{5}\rho ^{2}-12n^{5}+54n^{4}\rho
	^{2}+39n^{4}\rho -33n^{4}+46n^{3}\rho ^{2}+70n^{3}\rho \\
	+19n^{2}\rho ^{2}+56n^{2}\rho +33n^{2}+3n\rho ^{2}+21n\rho +24n+3\rho +5.
	\label{Phi}
	\end{multline}
\end{lemma}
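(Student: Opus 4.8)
The plan is to build on the rational identity (\ref{Rpnqnrn}), which already rewrites the desired determinant as
\[
\det\begin{pmatrix} p_n^{(\rho)} & q_n^{(\rho)} \\ p_{n+1}^{(\rho)} & q_{n+1}^{(\rho)} \end{pmatrix}
= q_n^{(\rho)} r_{n+1}^{(\rho)} - q_{n+1}^{(\rho)} r_n^{(\rho)},
\]
so that everything reduces to evaluating the right-hand side. Using (\ref{AppN}), Corollary \ref{cor} and the definition of $\mathcal{R}_{n,2}^{(\rho)}$, one has $q_m^{(\rho)}=B_m^{(\rho)}(1)$ and $r_m^{(\rho)}=\mathcal{R}_{m,2}^{(\rho)}(1)=-\tfrac12\sum_{j\ge 0}\mathcal{F}_{m,2}^{(\rho)}(j)$, so I would collect the two remainders into a single convergent series
\[
q_n^{(\rho)} r_{n+1}^{(\rho)} - q_{n+1}^{(\rho)} r_n^{(\rho)}
= -\tfrac12\sum_{j\ge 0} g_n(j), \qquad
g_n(z):=q_n^{(\rho)}\mathcal{F}_{n+1,2}^{(\rho)}(z)-q_{n+1}^{(\rho)}\mathcal{F}_{n,2}^{(\rho)}(z).
\]
The first simplification is that $g_n(j)=0$ for $0\le j\le n-2$: by (\ref{NNest}) the numerator of $\mathcal{F}_{m,1}^{(\rho)}$ carries the double factor $(-z)_{m-1}^2$, so $\mathcal{F}_{m,1}^{(\rho)}$ and its derivative $\mathcal{F}_{m,2}^{(\rho)}$ both have double zeros there and the series effectively begins at $j=n-1$.

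Before evaluating the sum I would record why the answer must be rational, as a consistency check. By (\ref{ResZIII2}) we have $r_m^{(\rho)}=q_m^{(\rho)}\zeta(3)-p_m^{(\rho)}$, whence $q_n^{(\rho)}r_{n+1}^{(\rho)}-q_{n+1}^{(\rho)}r_n^{(\rho)}=q_{n+1}^{(\rho)}p_n^{(\rho)}-q_n^{(\rho)}p_{n+1}^{(\rho)}$ and the $\zeta(3)$ contributions cancel; the potential $\zeta(2)$ term is already absent because $A_m^{(\rho)}(1)=0$ from (\ref{Azero}) enters Lemma \ref{LRes}. This, however, merely re-expresses the determinant through its own defining relation, so the genuine arithmetic content is the exact value of the series $\sum_{j\ge 0} g_n(j)$.

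For that value I would evaluate the series by telescoping. Since
\[
g_n(z)=\frac{d}{dz}\Bigl(q_n^{(\rho)}\mathcal{F}_{n+1,1}^{(\rho)}(z)-q_{n+1}^{(\rho)}\mathcal{F}_{n,1}^{(\rho)}(z)\Bigr)
\]
is a rational function decaying like $z^{-3}$ at infinity, I would construct a rational antidifference $\phi_n$, that is $g_n(z)=\phi_n(z+1)-\phi_n(z)$ with $\phi_n(\infty)=0$, by applying Gosper's algorithm (creative telescoping) to the fully factored form (\ref{NNest}); then $\sum_{j\ge 0}g_n(j)=-\phi_n(0)$ and hence the determinant equals $\tfrac12\phi_n(0)$. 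The data feeding this computation are the explicit coefficients (\ref{bes}) (which furnish $q_n^{(\rho)},q_{n+1}^{(\rho)}$ and the leading coefficients of $B_n^{(\rho)},B_{n+1}^{(\rho)}$) together with the edge evaluations $\mathcal{F}_{n,2}^{(\rho)}(n-1)$, $\mathcal{F}_{n,2}^{(\rho)}(n)$ and $\mathcal{F}_{n,1}^{(\rho)}(n)$ of Lemma \ref{LMRE}, which control $\phi_n$ near $z=n-1,n$ where $g_n$ first becomes nonzero.

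The main obstacle is precisely this last step: producing the telescoping certificate $\phi_n$ and then carrying out the rational-function algebra until $\tfrac12\phi_n(0)$ simplifies to $-\Phi_n^{(\rho)}/\bigl(2n^4(n+1)^4\bigr)$ with numerator matching (\ref{Phi}) exactly. To fix the overall normalization and to guard against sign and off-by-one errors, I would first verify the case $n=1$ directly, computing $p_1^{(\rho)},q_1^{(\rho)},p_2^{(\rho)},q_2^{(\rho)}$ from (\ref{AppN})--(\ref{bes}) and checking that $p_1^{(\rho)}q_2^{(\rho)}-p_2^{(\rho)}q_1^{(\rho)}$ agrees with $-\Phi_1^{(\rho)}/32$.
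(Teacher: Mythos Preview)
Your reduction to $q_n^{(\rho)}r_{n+1}^{(\rho)}-q_{n+1}^{(\rho)}r_n^{(\rho)}$ is fine, and your observation that the pole conditions for a rational antidifference of $g_n$ are met (because $\sum_k b_{k,m}^{(\rho)}=q_m^{(\rho)}$ and $\sum_k a_{k,m}^{(\rho)}=A_m^{(\rho)}(1)=0$) is correct. The gap is what happens after Gosper succeeds. The certificate $\phi_n$ is built from the partial–fraction data of $g_n$, and those data are the numbers $q_{n+1}^{(\rho)}b_{k,n}^{(\rho)}-q_n^{(\rho)}b_{k,n+1}^{(\rho)}$ and $q_{n+1}^{(\rho)}a_{k,n}^{(\rho)}-q_n^{(\rho)}a_{k,n+1}^{(\rho)}$ for \emph{all} $k=0,\dots,n+1$. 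Hence $\phi_n(0)$ comes out as a linear expression in the two constants $q_n^{(\rho)}$ and $q_{n+1}^{(\rho)}$, and those constants are genuine hypergeometric sums with no closed form in $n$ and~$\rho$. Nothing in your outline explains why they disappear; the check at $n=1$ does not address this because there $q_1^{(\rho)},q_2^{(\rho)}$ are explicit numbers. So as written the plan cannot produce the polynomial $\Phi_n^{(\rho)}$ in closed form.

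The paper avoids this obstruction by never letting $q_m^{(\rho)}$ enter the final expression. Instead of summing the series, it goes back to the integral form: using $A_n^{(\rho)}(1)=0$ and the relation~(\ref{Cons}) one first shows
\[
q_n^{(\rho)}r_{n+1}^{(\rho)}=\tfrac12\int_0^1\frac{\psi_{n,1}^{(\rho)}(x)\,\psi_{n+1,1}^{(\rho)}(x)}{1-x}\,dx,
\]
and then expands $\psi_{n+1,1}^{(\rho)}=A_{n+1}^{(\rho)}-B_{n+1}^{(\rho)}\log x$ against $\psi_{n,1}^{(\rho)}$ and $\psi_{n,2}^{(\rho)}$. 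Writing $A_{n+1}^{(\rho)}(x)/(1-x)$ and $(B_{n+1}^{(\rho)}(x)-B_{n+1}^{(\rho)}(1))/(1-x)$ as finite geometric sums and invoking the orthogonality conditions~(\ref{Orth_Cs}) kills every term except the top ones, and the piece $B_{n+1}^{(\rho)}(1)\cdot(\dots)$ exactly reproduces $q_{n+1}^{(\rho)}r_n^{(\rho)}$, which cancels. One is left with the closed finite combination
\[
\tfrac12 b_{n,n+1}^{(\rho)}\mathcal{F}_{n,2}^{(\rho)}(n-1)+\tfrac12 b_{n+1,n+1}^{(\rho)}\bigl(\mathcal{F}_{n,2}^{(\rho)}(n-1)+\mathcal{F}_{n,2}^{(\rho)}(n)\bigr)-\tfrac12 a_{n+1,n+1}^{(\rho)}\mathcal{F}_{n,1}^{(\rho)}(n),
\]
which involves only the explicit edge coefficients from~(\ref{bes}) and the three evaluations of Lemma~\ref{LMRE}; no $q_m^{(\rho)}$ survives. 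Substituting those values and simplifying gives $-\Phi_n^{(\rho)}/(2n^4(n+1)^4)$. The missing idea in your plan is precisely this orthogonality-driven cancellation that replaces the infinite telescoping (with its dependence on $q_n^{(\rho)},q_{n+1}^{(\rho)}$) by a four-term closed expression.
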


\begin{proof}
	In fact, using $\left( \text{\ref{Azero}}\right) $ as well as $\left( \text{%
		\ref{Cons}}\right) $ we get 
	\begin{eqnarray*}
		q_{n}^{\left( \rho \right) }r_{n+1}^{\left( \rho \right) }
		&=&2^{-1}A_{n}^{\left( \rho \right) }\left( 1\right) \int_{0}^{1}\fracd{\psi
			_{n+1,1}^{\left( \rho \right) }\left( x\right) }{1-x}dx-2^{-1}\int_{0}^{1}%
		\fracd{B_{n}^{\left( \rho \right) }\left( x\right) \psi _{n+1,2}^{\left( \rho
				\right) }\left( x\right) }{1-x}dx \\
		&=&2^{-1}\int_{0}^{1}\fracd{\psi _{n,1}^{\left( \rho \right) }\left( x\right)
			\psi _{n+1,1}^{\left( \rho \right) }\left( x\right) }{1-x}dx.
	\end{eqnarray*}%
	In addition 
	\begin{multline*}
	2^{-1}\int_{0}^{1}\fracd{\psi _{n,1}^{\left( \rho \right) }\left( x\right)
		\psi _{n+1,1}^{\left( \rho \right) }\left( x\right) }{1-x}%
	dx=2^{-1}\int_{0}^{1}\fracd{A_{n+1}^{\left( \rho \right) }\left( x\right)
		\psi _{n,1}^{\left( \rho \right) }\left( x\right) }{1-x}dx \\
	-2^{-1}\int_{0}^{1}\fracd{B_{n+1}^{\left( \rho \right) }\left( x\right) \psi
		_{n,2}^{\left( \rho \right) }\left( x\right) }{1-x}dx,
	\end{multline*}%
	where 
	\begin{multline*}
	\int_{0}^{1}\fracd{A_{n+1}^{\left( \rho \right) }\left( x\right) \psi
		_{n,1}^{\left( \rho \right) }\left( x\right) }{1-x}dx=-\sum_{0\leq k\leq
		n+1}a_{k,n+1}^{\left( \rho \right) }\sum_{1\leq j\leq
		k}\int_{0}^{1}x^{j-1}\psi _{n,1}^{\left( \rho \right) }\left( x\right) dx \\
	=-a_{n+1,n+1}^{\left( \rho \right) }\mathcal{F}_{n,1}^{\left( \rho \right)
	}\left( n\right) ,
	\end{multline*}%
	and 
	\begin{multline*}
	-2^{-1}\int_{0}^{1}\fracd{B_{n+1}^{\left( \rho \right) }\left( x\right) \psi
		_{n,2}^{\left( \rho \right) }\left( x\right) }{1-x}dx= \\
	2^{-1}\sum_{0\leq k\leq n+1}b_{k,n+1}^{\left( \rho \right) }\sum_{1\leq
		j\leq k}\int_{0}^{1}x^{j-1}\psi _{n,2}^{\left( \rho \right) }\left( x\right)
	dx+q_{n+1}^{\left( \rho \right) }r_{n}^{\left( \rho \right) }.
	\end{multline*}%
	Thus, taking the relations $\left( \text{\ref{R_Functions}}\right) $ and $%
	\left( \text{\ref{Rpnqnrn}}\right) $ into account, as well as the
	orthogonality conditions $\left( \text{\ref{Orth_Cs}}\right) $, we deduce 
	\begin{multline}
	p_{n}^{\left( \rho \right) }q_{n+1}^{\left( \rho \right) }-p_{n+1}^{\left(
		\rho \right) }q_{n}^{\left( \rho \right) }=2^{-1}b_{n,n+1}^{\left( \rho
		\right) }\mathcal{F}_{n,2}^{\left( \rho \right) }\left( n-1\right)
	+2^{-1}b_{n+1,n+1}^{\left( \rho \right) }\mathcal{F}_{n,2}^{\left( \rho
		\right) }\left( n-1\right) \\
	+2^{-1}b_{n+1,n+1}^{\left( \rho \right) }\mathcal{F}_{n,2}^{\left( \rho
		\right) }\left( n\right) -2^{-1}a_{n+1,n+1}^{\left( \rho \right) }\mathcal{F}%
	_{n,1}^{\left( \rho \right) }\left( n\right) .  \label{RFinal}
	\end{multline}%
	By considering Lemma \ref{LMRE} we conclude that $\left( \text{%
		\ref{RFinal}}\right)$ coincides with $\left( \text{\ref{eqq_1}}\right) $,
	which is the desired conclusion.
\end{proof}

Next, we apply the so-called algorithm of creative telescoping due to 
Gosper and Zeilberger~\cite{Abramov1,Abramov2,Abramov3,Abramov4,Zeilber},
from which we deduce the first part of the proof. For cross-validation we implemented this algorithm in different computer algebra systems, in particular, in Maple, in Payton and Mathematica.

\begin{theorem}
	\label{Proposition_AL_RR}Let $\left( p_{n}^{\left( \rho \right) }\right)
	_{n\geq 1}$, $\left( q_{n}^{\left( \rho \right) }\right) _{n\geq 1}$ \ and $%
	\left( r_{n}^{\left( \rho \right) }\right) _{n\geq 1}$ be the sequences
	defined by $\left( \text{\ref{AppN}}\right) $, where $\left( p_{n}^{\left(
		\rho \right) }\right) _{n\geq 1}$ and $\left( q_{n}^{\left( \rho \right)
	}\right) _{n\geq 1}$ satisfy the relation $\left( \text{\ref{eqq_1}}\right) $%
	. Then the following recurrence relation 
	\begin{equation}
	(n+2)^{4}\Phi _{n}^{\left( \rho \right) }y_{n+2}+\beta _{n}^{\left( \rho
		\right) }y_{n+1}+n^{4}\Phi _{n+1}^{\left( \rho \right) }y_{n}=0,\quad n\geq
	1,\quad \rho \in \mathbb{N},  \label{A_Like_RR}
	\end{equation}%
	holds, where 
	\begin{multline}
	\beta _{n}^{\left( \rho \right) }=-2(n+1)(408n^{8}\rho
	^{2}-204n^{8}+3162n^{7}\rho ^{2}+663n^{7}\rho \\
	-1683n^{7}+10028n^{6}\rho ^{2}+4433n^{6}\rho -4899n^{6}+16802n^{5}\rho ^{2}
	\\
	+12409n^{5}\rho -5487n^{5}+16070n^{4}\rho ^{2}+18955n^{4}\rho \\
	+735n^{4}+8888n^{3}\rho ^{2}+17212n^{3}\rho +7366n^{3} \\
	+2708n^{2}\rho ^{2}+9340n^{2}\rho +6870n^{2}+344n\rho ^{2} \\
	+2776n\rho +2748n+344\rho +412),  \label{Betan}
	\end{multline}%
	and $\Phi _{n}^{\left( \rho \right) }$ is given in $\left( \text{\ref{Phi}}%
	\right) $.
\end{theorem}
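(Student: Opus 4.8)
The plan is to prove that all three of $p_n^{(\rho)},q_n^{(\rho)},r_n^{(\rho)}$ solve the same second-order recurrence by reducing to two of them. Since $r_n^{(\rho)}=q_n^{(\rho)}\zeta(3)-p_n^{(\rho)}$ by $\left( \text{\ref{ResZIII2}}\right) $ and the recurrence is linear, it suffices to prove $\left( \text{\ref{A_Like_RR}}\right) $ for $q_n^{(\rho)}$ and $p_n^{(\rho)}$; the third sequence then follows automatically. I would first treat $q_n^{(\rho)}=B_n^{(\rho)}(1)=\sum_{0\leq k\leq n} b_{k,n}^{(\rho)}$, whose summand $\left( \text{\ref{bes}}\right) $ is a proper hypergeometric term in $(n,k)$ depending rationally on $\rho$. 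Applying Zeilberger's creative-telescoping algorithm produces coefficients $c_0(n),c_1(n),c_2(n)$ and a rational certificate $R(n,k)$ with $\sum_{j=0}^2 c_j(n)\,b_{k,n+j}^{(\rho)}=R(n,k+1)b_{k+1,n}^{(\rho)}-R(n,k)b_{k,n}^{(\rho)}$; summing over the finite $k$-range and checking that the telescoped boundary terms vanish at the endpoints gives $c_2(n)q_{n+2}^{(\rho)}+c_1(n)q_{n+1}^{(\rho)}+c_0(n)q_n^{(\rho)}=0$. I would then verify that, after normalization, $c_2(n)=(n+2)^4\Phi_n^{(\rho)}$, $c_1(n)=\beta_n^{(\rho)}$ and $c_0(n)=n^4\Phi_{n+1}^{(\rho)}$, with $\Phi$ and $\beta$ as in $\left( \text{\ref{Phi}}\right) $ and $\left( \text{\ref{Betan}}\right) $. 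This is the ``first part'' announced before the statement.

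To pass to $p_n^{(\rho)}$ I would use Lemma \ref{DCasar} rather than a second telescoping computation. Write $A_n=(n+2)^4\Phi_n^{(\rho)}$, $C_n=n^4\Phi_{n+1}^{(\rho)}$ and $W_n=p_n^{(\rho)}q_{n+1}^{(\rho)}-p_{n+1}^{(\rho)}q_n^{(\rho)}$, the latter evaluated by $\left( \text{\ref{eqq_1}}\right) $. Using only the recurrence $A_n q_{n+2}^{(\rho)}+\beta_n^{(\rho)}q_{n+1}^{(\rho)}+C_n q_n^{(\rho)}=0$ just established, a short manipulation yields the identity
\[
A_n W_{n+1}-C_n W_n=-q_{n+1}^{(\rho)}\bigl(A_n p_{n+2}^{(\rho)}+\beta_n^{(\rho)}p_{n+1}^{(\rho)}+C_n p_n^{(\rho)}\bigr).
\]
On the other hand, substituting the closed form $W_n=-\Phi_n^{(\rho)}/\bigl(2n^4(n+1)^4\bigr)$ from $\left( \text{\ref{eqq_1}}\right) $ shows $A_n W_{n+1}-C_n W_n=0$ as a polynomial identity in $n$ and $\rho$, precisely because $A_n$, $C_n$ and $W_n$ share the common factor $\Phi$. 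Since $q_{n+1}^{(\rho)}\neq 0$, the bracket must vanish, so $p_n^{(\rho)}$ satisfies $\left( \text{\ref{A_Like_RR}}\right) $; then $r_n^{(\rho)}=q_n^{(\rho)}\zeta(3)-p_n^{(\rho)}$ does as well, which completes the argument.

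The conceptual content is light, so the hard part will be computational. Confirming that the telescoping recurrence for $q_n^{(\rho)}$ has order exactly two and matches, after clearing denominators, the factored coefficients $(n+2)^4\Phi_n^{(\rho)}$ and $n^4\Phi_{n+1}^{(\rho)}$ together with the degree-nine polynomial $\beta_n^{(\rho)}$ of $\left( \text{\ref{Betan}}\right) $ requires verifying large polynomial identities in the two variables $n$ and $\rho$; likewise the Casoratian consistency $A_n W_{n+1}=C_n W_n$ is a nontrivial identity that only closes thanks to the shared factor $\Phi$. I would discharge both by computer algebra — as the authors indicate, cross-validated across several systems — and separately check that Zeilberger's certificate $R(n,k)$ decays at the summation endpoints for every $\rho\in\mathbb{N}$, so that no inhomogeneous boundary term spoils the homogeneous recurrence.
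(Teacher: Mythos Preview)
Your proposal is correct and follows essentially the same approach as the paper: first establish the recurrence for $q_n^{(\rho)}$ via Zeilberger's creative telescoping, then use the Casoratian identity of Lemma~\ref{DCasar} together with the already-proved $q$-recurrence to force the recurrence on a second sequence, and finally obtain the third by linearity. The only cosmetic difference is that the paper runs the Casoratian step on $r_n^{(\rho)}$ and deduces $p_n^{(\rho)}$ by linearity, whereas you do it the other way round; since $p_n q_{n+1}-p_{n+1}q_n=q_n r_{n+1}-q_{n+1}r_n$ by~\eqref{Rpnqnrn}, the two computations are literally the same identity.
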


This recurrence relation has the special property that it depends only on $\rho$ as parameter.


\begin{proof}
	The proof will be divided into three steps. In fact, firstly let us prove
	that the sequence $\left( q_{n}^{\left( \rho \right) }\right) _{n\geq 1}$ satisfies the
	recurrence relation $\left( \text{\ref{A_Like_RR}}\right) $. For such
	propose, let us suppose that there exists other constants $\alpha
	_{n}^{\left( \rho \right) }$, $\hat{\beta}_{n}^{\left( \rho \right) }$ and $%
	\gamma _{n}^{\left( \rho \right) }$, which are not all equal to zero, such that 
	\begin{equation*}
	\alpha _{n}^{\left( \rho \right) }q_{n+2}^{\left( \rho \right) }+\hat{\beta}
	_{n}^{\left( \rho \right) }q_{n+1}^{\left( \rho \right) }+\gamma
	_{n}^{\left( \rho \right) }q_{n}^{\left( \rho \right) }=0,\quad n\geq 0 . 
	\end{equation*}
	This is equivalent to 
	\begin{equation*}
	\sum_{0\leq k\leq n+2}\left( \alpha _{n}^{\left( \rho \right)
	}b_{k,n+2}^{\left( \rho \right) }+\hat{\beta}_{n}^{\left( \rho \right)
	}b_{k,n+1}^{\left( \rho \right) }+\gamma _{n}^{\left( \rho \right)
	}b_{k,n}^{\left( \rho \right) }\right) =0,
	\end{equation*}%
	since $b_{j,k}^{\left( \rho \right) }=0$, for $j>k$.
	(Compare with \eqref{bes} for the definition of the 
	$b_{k,n}^{\left( \rho \right) }$ terms.)
	Therefore%
	\begin{equation}
	\alpha _{n}^{\left( \rho \right) }b_{k,n+2}^{\left( \rho \right) }+\hat{
		\beta }_{n}^{\left( \rho \right) }b_{k,n+1}^{\left( \rho \right) }+\gamma
	_{n}^{\left( \rho \right) }b_{k,n}^{\left( \rho \right) }=f_{n}\left(
	k+1\right) -f_{n}\left( k\right) ,  \label{Zeilb1,1}
	\end{equation}%
	such that $f_{n}\left( 0\right) =f_{n}\left( n+3\right) =0$. 
	According to the method of Zeilberger we can
	define%
	\begin{equation}
	f_{n}\left( k\right) =\fracd{k^{4}\pi _{3,n}\left( k\right) \binom{n+k}{k}
		^{2} \binom{n}{k}^{2}}{\left( n-k+1\right) ^{2}\left( n-k+2\right)
		^{2}\left( n+k\right) },  \label{fn1,1}
	\end{equation}%
	where $\pi _{3,n}\left( k\right)$ is a polynomial of degree $3$ in $k$, with
	coefficients depending on $n$. From $\left( \text{\ref{Zeilb1,1}}\right) $
	and $\left( \text{\ref{fn1,1}}\right) $ the following equation%
	\begin{multline*}
	\alpha _{n}^{\left( \rho \right) }\left( n+k\right) \left( n+k+1\right)
	^{2}\left( n+k+2\right) \left( k+\rho n+2\rho +1\right) \\
	+\hat{\beta}_{n}^{\left( \rho \right) }\left( n-k+2\right) ^{2}\left(
	n+k\right) \left( n+k+1\right) \left( k+\rho n+\rho +1\right) \\
	+\gamma _{n}^{\left( \rho \right) }\left( n-k+1\right) ^{2}\left(
	n-k+2\right) ^{2}\left( k+\rho n+1\right) \\
	=\left( n-k+2\right) ^{2}\left( n+k\right) \left( n+k+1\right) \pi
	_{3,n}\left( k+1\right) -k^{4}\pi _{3,n}\left( k\right) ,
	\end{multline*}%
	holds. The above leads to a $6$-equation linear system with $7$-unknowns. 
	A particular solution to this system 
	can be obtained by computer algebra 
	and is given by $\alpha _{n}^{\left( \rho
		\right) }=(n+2)^{4}\Phi _{n}^{\left( \rho \right) }$, $\gamma _{n}^{\left(
		\rho \right) }=n^{4}\Phi _{n+1}^{\left( \rho \right) }$ and $\hat{\beta}%
	_{n}^{\left( \rho \right) }=\beta _{n}^{\left( \rho \right) }$, which proves
	that the sequence $\left( q_{n}^{\left( \rho \right) }\right) _{n\geq 1}$ satisfies the
	recurrence relation $\left( \text{\ref{A_Like_RR}}\right)$.
	
	Our next goal is to prove that 
	the sequence $\left( r_{n}^{\left( \rho \right) }\right)_{n\geq 1}$ 
	satisfies the recurrence relation $\left( \text{\ref{A_Like_RR}}%
	\right) $. For this purpose let us use the Lemma \ref{DCasar}, from
	which we have%
	\begin{align*}
	& q_{n}^{\left( \rho \right) }r_{n+1}^{\left( \rho \right) }=q_{n+1}^{\left(
		\rho \right) }r_{n}^{\left( \rho \right) }-\fracd{\Phi _{n}^{\left( \rho
			\right) }}{2n^{4}\left( n+1\right) ^{4}}, \\
	& q_{n+1}^{\left( \rho \right) }r_{n+2}^{\left( \rho \right)
	}=q_{n+2}^{\left( \rho \right) }r_{n+1}^{\left( \rho \right) }-\fracd{\Phi
		_{n+1}^{\left( \rho \right) }}{2\left( n+1\right) ^{4}\left( n+2\right) ^{4}}%
	,
	\end{align*}%
	which is equivalent to%
	\begin{align*}
	& \fracd{q_{n}^{\left( \rho \right) }}{q_{n+1}^{\left( \rho \right) }}
	r_{n+1}^{\left( \rho \right) }=r_{n}^{\left( \rho \right) }-\fracd{\Phi
		_{n}^{\left( \rho \right) }}{2n^{4}\left( n+1\right) ^{4}q_{n+1}^{\left(
			\rho \right) }}, \\
	& r_{n+2}^{\left( \rho \right) }=\fracd{q_{n+2}^{\left( \rho \right) }}{
		q_{n+1}^{\left( \rho \right) }}r_{n+1}^{\left( \rho \right) }-\fracd{\Phi
		_{n+1}^{\left( \rho \right) }}{2\left( n+1\right) ^{4}\left( n+2\right)
		^{4}q_{n+1}^{\left( \rho \right) }}.
	\end{align*}%
	Thus, multiplying the first equation by $-n^{4}\Phi _{n+1}^{\left( \rho
		\right) }$, the second one by~$(n+2)^{4}\Phi _{n}^{\left( \rho \right)}$,
	and adding both equations we deduce%
	\begin{equation*}
	(n+2)^{4}\Phi _{n}^{\left( \rho \right) }r_{n+2}^{\left( \rho \right) }+ 
	\tilde{\beta}_{n}^{\left( \rho \right) }r_{n+1}^{\left( \rho \right)
	}+n^{4}\Phi _{n+1}^{\left( \rho \right) }r_{n}^{\left( \rho \right) }=0,
	\end{equation*}%
	where%
	\begin{equation*}
	\tilde{\beta}_{n}^{\left( \rho \right) }=-\fracd{n^{4}\Phi _{n+1}^{\left(
			\rho \right) }q_{n}^{\left( \rho \right) }}{q_{n+1}^{\left( \rho \right) }}- 
	\fracd{(n+2)^{4}\Phi _{n}^{\left( \rho \right) }q_{n+2}^{\left( \rho \right)
	} }{q_{n+1}^{\left( \rho \right) }},
	\end{equation*}
	which coincides with $\left( \text{\ref{Betan}}\right) $ since the sequence $%
	\left( q_{n}^{\left( \rho \right) }\right) _{n\geq 1}$ satisfies the
	recurrence relation $\left( \text{\ref{A_Like_RR}}\right) $. Therefore, we
	conclude that $\left( r_{n}^{\left( \rho \right) }\right) _{n\geq 1}$ also
	satisfies $\left( \text{\ref{A_Like_RR}}\right) $. Finally, the sequence $%
	\left( p_{n}^{\left( \rho \right) }=q_{n}^{\left( \rho \right) }\zeta \left(
	3\right) -r_{n}^{\left( \rho \right) }\right) _{n\geq 0}$ satisfies the
	recurrence relation $\left( \text{\ref{A_Like_RR}}\right) $ as a linear
	combination of the sequences $\left( q_{n}^{\left( \rho \right) }\right)
	_{n\geq 0}$ and $\left( r_{n}^{\left( \rho \right) }\right) _{n\geq 0}$.
	This completes the proof.
\end{proof}

Using the expressions%
\begin{equation*}
q_{n}^{\left( \rho \right) }=\sum_{1\leq k\leq n}b_{k,n}^{\left( \rho
	\right) }\quad \mbox{and}\quad p_{n}^{\left( \rho \right) }=\sum_{1\leq
	k\leq n}\left( b_{k,n}^{\left( \rho \right) }H_{k}^{3}+2^{-1}a_{k,n}^{\left(
	\rho \right) }H_{k}^{2}\right) ,
\end{equation*}%
where $H_{k}^{r}$ is the 
harmonic number $k$ of order $r$ as defined in~\eqref{HA},
as well as $nb_{k,n}^{\left( \rho \right) }\in \mathbb{Z}$, $n\mathcal{L}%
_{n}a_{k,n}^{\left( \rho \right) }\in \mathbb{Z}$, and taking into account
that $\mathcal{L}_{n}^{j}H_{k}^{\left( j\right) }\in \mathbb{Z}$ for $%
k=0,1,\ldots ,n$, with $j\in \mathbb{Z}^{+}$, we deduce that $nq_{n}^{\left(
	\rho \right) }\in \mathbb{Z}$ and $2n\mathcal{L}_{n}^{3}p_{n}^{\left( \rho
	\right) }\in \mathbb{Z}$. 
Thus, from Theorem \ref{Proposition_AL_RR} we have
that the characteristic equation for $\left( \text{\ref{A_Like_RR}}\right) $
is $t^{2}-34t+1=0$ and its zeros are $t_{1}=\varpi ^{4}$ and $t_{2}=\varpi
^{-4}$ respectively. 

From Poincar\'e's theorem \cite{perron,poincare}
the characteristic equation
has the behavior $q_{n}^{\left( \rho \right) }=\mathcal{O}\left( \varpi
^{4n}\right) $ and $r_{n}^{\left( \rho \right) }=\mathcal{O}\left( \varpi
^{-4n}\right) $, as $n$ goes to infinity, for the two linearly independent
solutions, respectively. Then, assuming that $\zeta \left( 3\right) =p/q$,
where $p,q\in \mathbb{Z}^{+}$, we have that $2qn\mathcal{L}%
_{n}^{3}r_{n}^{\left( \rho \right) }=2pn\mathcal{L}_{n}^{3}q_{n}^{\left(
	\rho \right) }-2qn\mathcal{L}_{n}^{3}p_{n}^{\left( \rho \right) }$, is an
integer different from zero. Therefore, as a consequence of the prime
numbers theorem we deduce that $1\leq 2qn\mathcal{L}_{n}^{3}\left\vert
r_{n}^{\left( \rho \right) }\right\vert =\mathcal{O}\left( \mathcal{L}%
_{n}^{3}\varpi ^{-4n}\right) $, which is a contradiction, and moreover $%
e^{3}\varpi ^{-4}=0,591263\ldots <1$. Clearly, the above proves Ap\'ery's
theorem.
%
%
%

Note that the characteristic equation $t^{2}-34t+1=0$
of \eqref{A_Like_RR} can be determined by the following steps:
The coefficients of equation \eqref{A_Like_RR} are polynomials of order the same order, namely order 9.
Moreover, the polynomials have the same leading coefficients.
Therefore it is sufficient to divide all the equation by any of these polynomial coefficients
and then apply the limit $n \rightarrow \infty$, which gives the characteristic equation.

An important consequence of the Theorem \ref{Proposition_AL_RR} is the
continued fraction representation of the number $\zeta \left( 3\right) $.
Below we present a new continued fraction expansion for $\zeta \left(
3\right) $ from our results.

\begin{theorem}
	\cite[p. 31]{Jones} Two irregular continued fractions 
	\begin{equation*}
	a_{0}+\fracd{b_{1}\mid }{\mid a_{1}}+\fracd{b_{2}\mid }{\mid a_{2}}+\fracd{
		b_{3}\mid }{\mid a_{3}}+\cdots +\fracd{b_{n}\mid }{\mid a_{n}}+\cdots ,\quad
	a_{0}^{\prime }+\fracd{b_{1}^{\prime }\mid }{\mid a_{1}^{\prime }}+\fracd{
		b_{2}^{\prime }\mid }{\mid a_{2}^{\prime }}+\fracd{b_{3}^{\prime }\mid }{\mid
		a_{3}^{\prime }}+\cdots +\fracd{b_{n}^{\prime }\mid }{\mid a_{n}^{\prime }}
	+\cdots ,
	\end{equation*}
	are equivalent if and only if there exists a sequence of non-zero $\left(
	c_{n}\right) _{n\geq 0}$ with $c_{0}=1$ such that 
	\begin{equation}
	a_{n}^{\prime }=c_{n}a_{n},\quad n=0,1,2,\ldots ,\quad b_{n}^{\prime
	}=c_{n}c_{n-1}b_{n},\quad n=1,2,\ldots  \label{NICF}
	\end{equation}
	\label{BorweinT2}
\end{theorem}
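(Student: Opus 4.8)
The plan is to pass from the two continued fractions to their sequences of numerators and denominators and to show that equivalence --- equality of all convergents $p_n/q_n=p_n'/q_n'$ --- is exactly captured by a common scaling $p_n'=\Lambda_n p_n$, $q_n'=\Lambda_n q_n$, where $\Lambda_n=\prod_{0\le k\le n}c_k$. As in Lemma~\ref{lem:jones}, the $n$-th numerators and denominators of the first fraction obey
\begin{equation*}
p_n=a_np_{n-1}+b_np_{n-2},\qquad q_n=a_nq_{n-1}+b_nq_{n-2},
\end{equation*}
with $q_{-1}=0$, $p_{-1}=q_0=1$, $p_0=a_0$, and analogously for the primed fraction; I would first record the determinant identity $p_nq_{n-1}-p_{n-1}q_n=(-1)^{n-1}b_1b_2\cdots b_n$, which follows by a one-line induction from the recurrences.

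For the implication $(\Leftarrow)$ I would assume \eqref{NICF}, set $\Lambda_{-1}=1$ and $\Lambda_n=\prod_{0\le k\le n}c_k$, and prove $p_n'=\Lambda_np_n$, $q_n'=\Lambda_nq_n$ by induction on $n$. The cases $n=-1,0$ use $c_0=1$; in the inductive step I substitute $a_n'=c_na_n$ and $b_n'=c_nc_{n-1}b_n$ into $a_n'p_{n-1}'+b_n'p_{n-2}'$ and factor out $\Lambda_n$ using $\Lambda_{n-1}=\Lambda_n/c_n$ and $\Lambda_{n-2}=\Lambda_n/(c_nc_{n-1})$, and likewise for the denominators. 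Since $\Lambda_n\ne0$, the convergents coincide and the two fractions are equivalent.

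For the converse $(\Rightarrow)$ I would assume $p_n/q_n=p_n'/q_n'$ for all $n$ and define $\Lambda_n:=q_n'/q_n$, so that $\Lambda_0=1$ and, by equality of convergents, also $p_n'=\Lambda_np_n$. To recover the coefficients I substitute $p_m'=\Lambda_mp_m$ and $q_m'=\Lambda_mq_m$ into the primed recurrences, obtaining
\begin{equation*}
(a_n'\Lambda_{n-1}-\Lambda_na_n)\,p_{n-1}+(b_n'\Lambda_{n-2}-\Lambda_nb_n)\,p_{n-2}=0,
\end{equation*}
together with the same identity with $p$ replaced by $q$. This is a homogeneous $2\times2$ system in the two bracketed quantities whose determinant is $p_{n-1}q_{n-2}-p_{n-2}q_{n-1}=(-1)^{n}b_1\cdots b_{n-1}\ne0$; hence both brackets vanish, giving $a_n'=(\Lambda_n/\Lambda_{n-1})a_n$ and $b_n'=(\Lambda_n/\Lambda_{n-2})b_n$. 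Setting $c_n:=\Lambda_n/\Lambda_{n-1}$ (with $c_0=1$) then reproduces $a_n'=c_na_n$ and $b_n'=c_nc_{n-1}b_n$, which is \eqref{NICF}.

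The step I expect to be the main obstacle is the converse, and precisely the non-degeneracy needed to make it go through: defining $\Lambda_n=q_n'/q_n$ requires $q_n\ne0$, and inverting the linear system requires $b_1\cdots b_{n-1}\ne0$. Both are consequences of the standing hypothesis that the fractions are genuine irregular continued fractions with $b_n\ne0$ and finite convergents, the same setting as in Lemma~\ref{lem:jones}; the resulting $c_n=\Lambda_n/\Lambda_{n-1}$ are then automatically non-zero. Once these conditions are secured the argument is purely algebraic, so the remaining work is only the routine bookkeeping of the inductions and the $2\times2$ elimination.
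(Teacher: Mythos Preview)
The paper does not supply its own proof of this theorem: it is quoted verbatim from Jones--Thron \cite[p.~31]{Jones} and used as a black box to derive the subsequent continued-fraction corollary, so there is no in-paper argument to compare against.

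Your proposal is the standard textbook proof of this classical equivalence criterion and is correct. The backward direction via the inductive scaling $p_n'=\Lambda_n p_n$, $q_n'=\Lambda_n q_n$ with $\Lambda_n=\prod_{k\le n}c_k$ is exactly how Jones--Thron argue it, and your forward direction---reading off $c_n=\Lambda_n/\Lambda_{n-1}$ after eliminating via the $2\times2$ system with determinant $(-1)^n b_1\cdots b_{n-1}$---is likewise the intended route. Your identification of the non-degeneracy hypotheses ($b_n\neq0$, convergents defined) as the only delicate point is accurate; these are part of the standing assumptions in \cite{Jones} for irregular continued fractions, so nothing further is needed.
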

Using the previous theorems we deduce the following results.
\begin{corollary}
	Let $\rho \in \mathbb{N}$, then the following irregular continued fraction
	expansion for $\zeta \left( 3\right) $ 
	\begin{multline*}
	\zeta \left( 3\right) =\fracd{7\rho +12\mid }{\mid 6\rho +10}+\fracd{2\left(
		146\rho ^{2}+189\rho +17\right) \mid }{\mid \text{ \ \ \ \ \ \ }1654\rho
		+1981\text{ \ \ \ \ \ }} \\
	+\fracd{-16(7\rho +12)\left( 2082\rho ^{2}+1453\rho -727\right) \mid }{\mid 
		\text{ \ \ \ \ \ \ \ \ \ \ \ \ \ \ \ \ \ \ \ \ \ \ }\mathcal{Q}_{3}^{\left(
			\rho \right) }\text{\ \ \ \ \ \ \ \ \ \ \ \ \ \ \ \ \ }} \\
	+\fracd{\mathcal{P}_{4}^{\left( \rho \right) }\mid }{\mid \mathcal{Q}%
		_{4}^{\left( \rho \right) }}+\cdots +\fracd{\mathcal{P}_{n}^{\left( \rho
			\right) }\mid }{\mid \mathcal{Q}_{n}^{\left( \rho \right) }}+\cdots ,
	\end{multline*}%
	holds, where 
	\begin{multline*}
	\mathcal{P}_{n}^{\left( \rho \right) }=-(n-2)^{4}(n-1)^{4}(24n^{5}\rho
	^{2}-12n^{5}-306n^{4}\rho ^{2}+39n^{4}\rho +147n^{4} \\
	+1558n^{3}\rho ^{2}-398n^{3}\rho -684n^{3}-3959n^{2}\rho ^{2}+1532n^{2}\rho
	\\
	+1491n^{2}+5019n\rho ^{2}-2637n\rho -1470n-2538\rho ^{2}+1713\rho \\
	+473)(24n^{5}\rho ^{2}-12n^{5}-66n^{4}\rho ^{2}+39n^{4}\rho
	+27n^{4}+70n^{3}\rho ^{2} \\
	-86n^{3}\rho +12n^{3}-35n^{2}\rho ^{2}+80n^{2}\rho -45n^{2}+7n\rho ^{2} \\
	-37n\rho +30n+7\rho -7),
	\end{multline*}%
	and 
	\begin{multline*}
	\mathcal{Q}_{n}^{\left( \rho \right) }=2(n-1)(408n^{8}\rho
	^{2}-204n^{8}-3366n^{7}\rho ^{2}+663n^{7}\rho +1581n^{7} \\
	+11456n^{6}\rho ^{2}-4849n^{6}\rho -4185n^{6}-20710n^{5}\rho
	^{2}+14905n^{5}\rho \\
	+3321n^{5}+21330n^{4}\rho ^{2}-24795n^{4}\rho +4425n^{4}-12488n^{3}\rho ^{2}
	\\
	+23932n^{3}\rho -11066n^{3}+3892n^{2}\rho ^{2}-13348n^{2}\rho +8922n^{2} \\
	-504n\rho ^{2}+4008n\rho -3300n-504\rho +476).
	\end{multline*}
\end{corollary}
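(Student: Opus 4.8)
The plan is to feed the two approximation sequences $(p_n^{(\rho)})_{n\ge 1}$ and $(q_n^{(\rho)})_{n\ge 1}$ of \eqref{AppN} into Jones' construction (Lemma~\ref{lem:jones}) to obtain an irregular continued fraction whose convergents are the ratios $p_n^{(\rho)}/q_n^{(\rho)}$, and then to renormalise its partial quotients by an equivalence transformation (Theorem~\ref{BorweinT2}) so that every denominator cancels and the polynomials $\mathcal{P}_n^{(\rho)}$ and $\mathcal{Q}_n^{(\rho)}$ of the statement emerge. The non-degeneracy hypothesis of Lemma~\ref{lem:jones}, namely that consecutive convergents be distinct, is exactly the non-vanishing of the Casoratian $p_n^{(\rho)}q_{n-1}^{(\rho)}-p_{n-1}^{(\rho)}q_n^{(\rho)}$; by \eqref{eqq_1} of Lemma~\ref{DCasar} this equals, up to sign, $\Phi_{n-1}^{(\rho)}/\bigl(2(n-1)^4n^4\bigr)$, which is nonzero for every $\rho\in\mathbb{N}$ and $n\ge 1$. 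Since moreover $r_n^{(\rho)}=q_n^{(\rho)}\zeta(3)-p_n^{(\rho)}\to 0$ by \eqref{ResZIII2} together with the Poincar\'e estimate established after Theorem~\ref{Proposition_AL_RR}, the resulting continued fraction converges to $\zeta(3)$.

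Rather than evaluate the long Casoratians in Jones' formulas term by term, I would exploit that both $p_n^{(\rho)}$ and $q_n^{(\rho)}$ solve the three-term recurrence \eqref{A_Like_RR} of Theorem~\ref{Proposition_AL_RR} and are linearly independent, the latter again because their Casoratian \eqref{eqq_1} never vanishes. Writing \eqref{A_Like_RR} in the shifted form
\begin{equation*}
n^4\Phi_{n-2}^{(\rho)}y_n+\beta_{n-2}^{(\rho)}y_{n-1}+(n-2)^4\Phi_{n-1}^{(\rho)}y_{n-2}=0,
\end{equation*}
and comparing it with the recurrence $y_n=a_ny_{n-1}+b_ny_{n-2}$ obeyed by the numerators and denominators of any irregular continued fraction, the partial quotients can be read off at once,
\begin{equation*}
a_n=-\frac{\beta_{n-2}^{(\rho)}}{n^4\Phi_{n-2}^{(\rho)}},\qquad b_n=-\frac{(n-2)^4\Phi_{n-1}^{(\rho)}}{n^4\Phi_{n-2}^{(\rho)}}.
\end{equation*}
As a consistency check, the expression for $b_n$ also follows by inserting \eqref{eqq_1} into Jones' ratio-of-Casoratians formula.

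Next I would clear the common denominator $n^4\Phi_{n-2}^{(\rho)}$ by the equivalence transformation of Theorem~\ref{BorweinT2} with the sequence $c_0=1$ and $c_n=n^4\Phi_{n-2}^{(\rho)}$ for $n\ge 1$, so that $c_nc_{n-1}=n^4(n-1)^4\Phi_{n-2}^{(\rho)}\Phi_{n-3}^{(\rho)}$. By the rules \eqref{NICF} the transformed quotients then become polynomials,
\begin{equation*}
a_n'=c_na_n=-\beta_{n-2}^{(\rho)},\qquad b_n'=c_nc_{n-1}b_n=-(n-1)^4(n-2)^4\Phi_{n-1}^{(\rho)}\Phi_{n-3}^{(\rho)}.
\end{equation*}
It then remains to identify these with the quantities in the statement. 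Expanding $-\beta_{n-2}^{(\rho)}$ from \eqref{Betan} in powers of $n$ should reproduce the factor $2(n-1)$ times the degree-eight polynomial displayed in $\mathcal{Q}_n^{(\rho)}$, while the two degree-five factors of $\mathcal{P}_n^{(\rho)}$ are to be recognised, through the definition \eqref{Phi}, as $\Phi_{n-1}^{(\rho)}$ and $\Phi_{n-3}^{(\rho)}$; this yields $a_n'=\mathcal{Q}_n^{(\rho)}$ and $b_n'=\mathcal{P}_n^{(\rho)}$.

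Finally, the three leading quotients must be pinned down separately, because the choice $c_n=n^4\Phi_{n-2}^{(\rho)}$ only accords with the recurrence once the indices on $\Phi_{n-2}^{(\rho)}$ and $\Phi_{n-3}^{(\rho)}$ are admissible, i.e. for $n\ge 4$. The plan is therefore to evaluate $p_n^{(\rho)}$ and $q_n^{(\rho)}$ for $n=1,2,3$ directly from the closed forms \eqref{bes}, to form the low-index partial quotients via Lemma~\ref{lem:jones}, and then to fix the initial factors $c_1,c_2,c_3$ consistently with $c_0=1$ so as to splice the explicit leading terms $\tfrac{7\rho+12}{6\rho+10}$, $\tfrac{2(146\rho^2+189\rho+17)}{1654\rho+1981}$ and $\tfrac{-16(7\rho+12)(2082\rho^2+1453\rho-727)}{\mathcal{Q}_3^{(\rho)}}$ onto the general formula; here close attention to the summation conventions in \eqref{bes} is needed. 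I expect the genuine difficulty to be computational rather than conceptual: verifying the two polynomial identities $-\beta_{n-2}^{(\rho)}=\mathcal{Q}_n^{(\rho)}$ and $-(n-1)^4(n-2)^4\Phi_{n-1}^{(\rho)}\Phi_{n-3}^{(\rho)}=\mathcal{P}_n^{(\rho)}$ as identities in the two variables $n$ and $\rho$, and carrying out the finite matching of the initial quotients, both of which are most safely confirmed with a computer-algebra system, exactly as was already done for the recurrence in Theorem~\ref{Proposition_AL_RR}.
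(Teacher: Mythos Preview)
Your proposal is correct and follows exactly the route the paper intends: the paper gives no detailed proof of this corollary, stating only that it is deduced ``using the previous theorems,'' namely Lemma~\ref{lem:jones}, Theorem~\ref{BorweinT2}, the recurrence of Theorem~\ref{Proposition_AL_RR}, and the Casoratian identity~\eqref{eqq_1}. Your plan of reading off $a_n,b_n$ from the shifted recurrence, clearing denominators via the equivalence transformation with $c_n=n^4\Phi_{n-2}^{(\rho)}$, and then matching the initial quotients by direct evaluation is precisely the computation implicit in the paper.
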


\begin{theorem}
	Let $\rho \in \mathbb{N}$, then the following relation 
	\begin{equation}
	\zeta \left( 3\right) =\fracd{7\rho +12}{6\rho +10}+\sum_{n\geq 1}\fracd{\Phi
		_{n}^{\left( \rho \right) }}{2n^{4}\left( n+1\right) ^{4}\Theta _{n}^{\left(
			\rho \right) }\Theta _{n+1}^{\left( \rho \right) }},  \label{eqq_2}
	\end{equation}%
	holds, where 
	\begin{equation}
	\Theta _{n}^{\left( \rho \right) }=\fracd{\rho n+1}{n}{_{5}F_{4}}\left( 
	\begin{array}{c|c}
	n+1,n,-n,-n,\rho n+2 &  \\ 
	& 1 \\ 
	1,1,1,\rho n+1 & 
	\end{array}%
	\right) ,  \label{HPhi}
	\end{equation}%
	and $\Phi _{n}^{\left( \rho \right) }$ is given in $\left( \text{\ref{Phi}}%
	\right) $.
\end{theorem}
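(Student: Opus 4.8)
The plan is to recognize the right-hand side of $\eqref{eqq_2}$ as the telescoped sequence of convergents $p_n^{(\rho)}/q_n^{(\rho)}$, whose limit is $\zeta(3)$. First I would record the convergence: by $\eqref{ResZIII2}$ one has $p_n^{(\rho)}/q_n^{(\rho)}=\zeta(3)-r_n^{(\rho)}/q_n^{(\rho)}$, while the Poincar\'e asymptotics obtained just after Theorem~\ref{Proposition_AL_RR}, namely $q_n^{(\rho)}=\mathcal{O}(\varpi^{4n})$ and $r_n^{(\rho)}=\mathcal{O}(\varpi^{-4n})$, give $r_n^{(\rho)}/q_n^{(\rho)}=\mathcal{O}(\varpi^{-8n})\to 0$. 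Hence $\zeta(3)=\lim_{n\to\infty}p_n^{(\rho)}/q_n^{(\rho)}$.

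Next I would telescope. For every integer $N>1$,
\[
\frac{p_N^{(\rho)}}{q_N^{(\rho)}}=\frac{p_1^{(\rho)}}{q_1^{(\rho)}}+\sum_{1\le n\le N-1}\frac{p_{n+1}^{(\rho)}q_n^{(\rho)}-p_n^{(\rho)}q_{n+1}^{(\rho)}}{q_n^{(\rho)}q_{n+1}^{(\rho)}}.
\]
The numerator of each summand is exactly the Casoratian of Lemma~\ref{DCasar}: by $\eqref{eqq_1}$, $p_{n+1}^{(\rho)}q_n^{(\rho)}-p_n^{(\rho)}q_{n+1}^{(\rho)}=\Phi_n^{(\rho)}/\bigl(2n^4(n+1)^4\bigr)$. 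Letting $N\to\infty$ and invoking the first step,
\[
\zeta(3)=\frac{p_1^{(\rho)}}{q_1^{(\rho)}}+\sum_{n\ge 1}\frac{\Phi_n^{(\rho)}}{2n^4(n+1)^4\,q_n^{(\rho)}q_{n+1}^{(\rho)}},
\]
the series converging absolutely since $q_n^{(\rho)}=\mathcal{O}(\varpi^{4n})$ makes the general term $\mathcal{O}(\varpi^{-8n})$.

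It remains to identify $q_n^{(\rho)}$ with $\Theta_n^{(\rho)}$ and to evaluate the leading term. For the identification I would start from $q_n^{(\rho)}=B_n^{(\rho)}(1)=\sum_{0\le k\le n}b_{k,n}^{(\rho)}$ and rewrite $\eqref{bes}$ through Pochhammer symbols. Using $\binom{n+k}{k}^2\binom{n}{k}^2=\frac{n+k}{n}\frac{(n+1)_k(n)_k(-n)_k^2}{(k!)^4}$, the factor $(k+\rho n+1)(n+k)^{-1}$ cancels the $(n+k)$ and supplies $(\rho n+2)_k/(\rho n+1)_k=(k+\rho n+1)/(\rho n+1)$, so that
\[
b_{k,n}^{(\rho)}=\frac{\rho n+1}{n}\,\frac{(n+1)_k(n)_k(-n)_k^2(\rho n+2)_k}{(k!)^4(\rho n+1)_k},
\]
which is precisely $\frac{\rho n+1}{n}$ times the $k$-th summand of the ${}_{5}F_{4}$ in $\eqref{HPhi}$ (recall $(1)_k=k!$). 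Summing over $k$ yields $q_n^{(\rho)}=\Theta_n^{(\rho)}$. For the leading term, a direct evaluation at $n=1$ (where only $k=0,1$ contribute) gives $q_1^{(\rho)}=3\rho+5$ and $p_1^{(\rho)}/q_1^{(\rho)}=(7\rho+12)/(6\rho+10)$, which is the claimed constant and finishes the proof.

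I expect the hypergeometric identification $q_n^{(\rho)}=\Theta_n^{(\rho)}$ to be the only genuinely delicate point, since it requires matching the Pochhammer bookkeeping of $\eqref{bes}$ against $\eqref{HPhi}$ term by term; in particular one must check that the displaced zero $z=\rho n$ built into $\mathcal{F}_{n,1}^{(\rho)}$ in $\eqref{NNest}$ is exactly what produces the extra parameter pair $\rho n+2,\rho n+1$. Everything else is the standard Casoratian--telescoping mechanism already prepared by Lemma~\ref{DCasar} and the determinant $\eqref{eqq_1}$.
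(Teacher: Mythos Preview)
Your proof is correct and follows essentially the same route as the paper's: telescoping the convergents $p_n^{(\rho)}/q_n^{(\rho)}$, inserting the Casoratian from Lemma~\ref{DCasar}, and identifying $q_n^{(\rho)}$ with the ${}_5F_4$ via the same Pochhammer rewriting of $b_{k,n}^{(\rho)}$. You are only slightly more explicit than the paper in justifying the limit $p_n^{(\rho)}/q_n^{(\rho)}\to\zeta(3)$ and in computing the initial term $(7\rho+12)/(6\rho+10)$.
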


\begin{proof}
	In fact, from $\left( \text{\ref{bes}}\right) $ and $\left( \text{\ref{AppN}}%
	\right) $ we deduce%
	\begin{equation*}
	q_{n}^{\left( \rho \right) }=\fracd{\rho n+1}{n}\sum_{0\leq k\leq n}\fracd{%
		\left( n+1\right) _{k}\left( n\right) _{k}\left( -n\right) _{k}^{2}\left(
		\rho n+2\right) _{k}}{\left( 1\right) _{k}^{2}\left( 1\right) _{k}\left(
		\rho n+1\right) _{k}}\fracd{1}{k!},
	\end{equation*}%
	which corresponds with $\left( \text{\ref{HPhi}}\right) $ according to $%
	\left( \text{\ref{rFs}}\right) $. In Addition, having in account 
	\begin{equation*}
	\fracd{p_{n}^{\left( \rho \right) }}{q_{n}^{\left( \rho \right) }}=\fracd{%
		p_{1}^{\left( \rho \right) }}{q_{1}^{\left( \rho \right) }}-\sum_{1\leq
		k\leq n-1}\left( \fracd{p_{k}^{\left( \rho \right) }}{q_{k}^{\left( \rho
			\right) }}-\fracd{p_{k+1}^{\left( \rho \right) }}{q_{k+1}^{\left( \rho
			\right) }}\right) ,
	\end{equation*}%
	and using $\left( \text{\ref{eqq_1}}\right) $ conjointly with 
	\begin{equation*}
	\zeta \left( 3\right) =\lim_{n\rightarrow \infty }\fracd{p_{n}^{\left( \rho
			\right) }}{q_{n}^{\left( \rho \right) }}=\fracd{p_{1}^{\left( \rho \right) }}{%
		q_{1}^{\left( \rho \right) }}-\sum_{n\geq 1}\left( \fracd{p_{n}^{\left( \rho
			\right) }q_{n+1}^{\left( \rho \right) }-p_{n+1}^{\left( \rho \right)
		}q_{n}^{\left( \rho \right) }}{q_{n}^{\left( \rho \right) }q_{n+1}^{\left(
			\rho \right) }}\right) ,
	\end{equation*}%
	we deduce $\left( \text{\ref{eqq_2}}\right) $. This completes the proof.
\end{proof}

\section{Convergence\label{SeccCSR}}

\subsection{Series representations}

In this paragraph several series representations of $\zeta(3)$ are
recalled. Many years after Euler's results, Chen and Srivastava (1998)
obtained several series representations for $\zeta \left( 3\right) $, which
converge faster than $\left( \text{\ref{ESR}}\right) $, including%
\begin{equation*}
\zeta \left( 3\right) =\lim_{n\rightarrow \infty }\zeta _{n}^{CS}\left(
3\right) ,
\end{equation*}%
where%
\begin{equation*}
\zeta _{n}^{CS}\left( 3\right) =-\fracd{8\pi ^{2}}{5}\sum_{k=0}^{n}\fracd{%
	\zeta \left( 2k\right) }{\left( 2k+1\right) \left( 2k+2\right) \left(
	2k+3\right) 2^{2k}}.
\end{equation*}%
Then, Srivastava (2000) \cite{Srivastava0} deduced the following result%
\begin{equation*}
\zeta \left( 3\right) =\lim_{n\rightarrow \infty }\zeta _{n}^{S}\left(
3\right) ,
\end{equation*}%
where%
\begin{equation*}
\zeta _{n}^{S}\left( 3\right) =-\fracd{6\pi ^{2}}{23}\sum_{k=0}^{n}\fracd{%
	(98k+121)\zeta \left( 2k\right) }{\left( 2k+1\right) \left( 2k+2\right)
	\left( 2k+3\right) (2k+4)(2k+5)2^{2k}}.
\end{equation*}%
In addition, Borwein et al. (2000) \cite{Borwein0} derived the following
series representation%
\begin{equation*}
\zeta \left( 3\right) =\lim_{n\rightarrow \infty }\zeta _{n}^{B}\left(
3\right) ,
\end{equation*}%
where%
\begin{equation*}
\zeta _{n}^{B}\left( 3\right) =\fracd{2\pi ^{2}}{7}\left[ \log 2-\fracd{1}{2}%
+\sum_{k=1}^{n}\fracd{\zeta \left( 2k\right) }{4^{k}\left( k+1\right) }\right]
.
\end{equation*}%
Later, Pilehrood and Pilehrood (2008) \cite{Pilehrood0} deduced the
expression%
\begin{equation*}
\zeta \left( 3\right) =\lim_{n\rightarrow \infty }\zeta _{n}^{A}\left(
3\right) ,
\end{equation*}%
where%
\begin{equation*}
\zeta _{n}^{A}\left( 3\right) =\fracd{1}{4}\sum_{k\geq 1}\left( -1\right)
^{k-1}\fracd{56k^{2}-32k+5}{k^{3}\left( 2k-1\right) ^{2}\binom{2k}{k}\binom{3k%
	}{k}},
\end{equation*}%
which is known as Amdeberhan's formula for $\zeta \left( 3\right) $, see 
\cite{Amdeberhan} for more details. Then, Pilehrood and Pilehrood (2010) 
\cite{Pilehrood1} arrived at the following expression%
\begin{equation*}
\zeta \left( 3\right) =\lim_{n\rightarrow \infty }\zeta _{n}^{PP08}\left(
3\right) ,
\end{equation*}%
where%
\begin{equation*}
\zeta _{n}^{PP08}\left( 3\right) =\sum_{k=0}^{n}\left( -1\right) ^{k}\fracd{%
	k!^{10}\left( 205k^{2}+250k+77\right) }{64\left( 2k+1\right) !^{5}},
\end{equation*}%
obtained initially by Amdeberhan and Zeilberger (1997), see \cite%
{Amdeberhan2} for more details. Analogously, Pilehrood and Pilehrood (2010) 
\cite{Pilehrood2} deduced the following formula%
\begin{equation*}
\zeta \left( 3\right) =\lim_{n\rightarrow \infty }\zeta _{n}^{PP10}\left(
3\right) ,
\end{equation*}%
where
\begin{equation*}
\zeta _{n}^{PP10}\left( 3\right) =\fracd{1}{2}\sum_{k\geq 1}\left( -1\right)
^{k-1}\fracd{205k^{2}-160k+32}{k^{5}\binom{2k}{k}^{5}}.
\end{equation*}%
More recently, Scheufens (2013) \cite{Scheufens} obtained%
\begin{equation*}
\zeta \left( 3\right) =\lim_{n\rightarrow \infty }\zeta _{n}^{Sch}\left(
3\right) ,
\end{equation*}%
where%
\begin{equation*}
\zeta _{n}^{Sch}\left( 3\right) =-\fracd{2\pi ^{2}}{7}\sum_{k=0}^{n}\fracd{%
	\zeta \left( 2k\right) }{4^{k}\left( k+1\right) \left( 2k+1\right) },
\end{equation*}
and Soria-Lorente (2014) \cite{Soria2} deduced
\begin{equation*}
\zeta \left( 3\right) =\lim_{n\rightarrow \infty }\zeta _{n}^{SL}\left(
3\right) ,
\end{equation*}%
where%
\begin{equation*}
\zeta _{n}^{SL}\left( 3\right) =\fracd{7}{6}+\sum_{k=0}^{n}\fracd{24n^{3}+30n^{2}+16n+3%
}{2n^{3}\left( n+1\right) ^{3}{\Theta }_{n}{\Theta }_{n+1}}.
\end{equation*}
Clearly, there are other series representations for $\zeta \left( 3\right)$, 
and there are ongoing investigations in this direction. 
It is important to point out that the main result obtained in this work improves
the convergence in comparison with the aforementioned results.

\subsection{Convergence rates}

If $\zeta_n(3)$ is the approximation at the $n$-th iteration and $\zeta(3)$
the exact value then the absolute error can be defined as
\begin{align}
\varepsilon_n = | \zeta_n(3) - \zeta(3) |.
\end{align}
In Figure \ref{fig:convergence}, the absolute error $\varepsilon_n$ is
visualized as a function of the index $n$ for several iteration methods.
Here, 20 iterations are realized in the index span from $n=51$ to $n=70$.

\begin{table}[htbp]
	\caption{Convergence of several iterations.}
	\label{tab:conv}
	\begin{center}
		\begin{tabular}{llllllll}
			$\zeta^{SL}$ & $\zeta^{CS}$ & $\zeta^{Sr}$ & $\zeta^B$ & $\zeta^{PP08}$ & $%
			\zeta^{A}$ & $\zeta^{PP10}$ & $\zeta^{Sch}$ \\ \hline\hline
			6,00E-159 & 8,56E-37 & 6,15E-38 & 3,48E-33 & 1,70E-77 & 6,98E-158 & 7,22E-155
			& 3,29E-35 \\ 
			5,20E-162 & 2,02E-37 & 1,43E-38 & 8,53E-34 & 6,07E-79 & 6,75E-161 & 6,98E-158
			& 7,93E-36 \\ 
			4,50E-165 & 4,79E-38 & 3,32E-39 & 2,09E-34 & 2,16E-80 & 6,54E-164 & 6,75E-161
			& 1,91E-36 \\ 
			3,90E-168 & 1,14E-38 & 7,74E-40 & 5,14E-35 & 7,72E-82 & 6,32E-167 & 6,54E-164
			& 4,61E-37 \\ 
			3,38E-171 & 2,69E-39 & 1,80E-40 & 1,26E-35 & 2,76E-83 & 6,12E-170 & 6,32E-167
			& 1,11E-37 \\ 
			2,93E-174 & 6,39E-40 & 4,21E-41 & 3,10E-36 & 9,86E-85 & 5,93E-173 & 6,12E-170
			& 2,68E-38 \\ 
			2,54E-177 & 1,52E-40 & 9,85E-42 & 7,63E-37 & 3,53E-86 & 5,74E-176 & 5,93E-173
			& 6,48E-39 \\ 
			2,20E-180 & 3,61E-41 & 2,30E-42 & 1,88E-37 & 1,26E-87 & 5,56E-179 & 5,74E-176
			& 1,57E-39 \\ 
			1,90E-183 & 8,59E-42 & 5,40E-43 & 4,61E-38 & 4,52E-89 & 5,38E-182 & 5,56E-179
			& 3,79E-40 \\ 
			1,65E-186 & 2,05E-42 & 1,26E-43 & 1,13E-38 & 1,62E-90 & 5,22E-185 & 5,38E-182
			& 9,18E-41 \\ 
			1,43E-189 & 4,87E-43 & 2,97E-44 & 2,79E-39 & 5,80E-92 & 5,05E-188 & 5,22E-185
			& 2,22E-41 \\ 
			1,24E-192 & 1,16E-43 & 6,98E-45 & 6,87E-40 & 2,08E-93 & 4,90E-191 & 5,05E-188
			& 5,38E-42 \\ 
			1,07E-195 & 2,78E-44 & 1,64E-45 & 1,69E-40 & 7,47E-95 & 4,74E-194 & 4,90E-191
			& 1,30E-42 \\ 
			9,30E-199 & 6,63E-45 & 3,86E-46 & 4,16E-41 & 2,68E-96 & 4,60E-197 & 4,74E-194
			& 3,16E-43 \\ 
			8,06E-202 & 1,58E-45 & 9,10E-47 & 1,03E-41 & 9,63E-98 & 4,46E-200 & 4,60E-197
			& 7,68E-44 \\ 
			6,98E-205 & 3,79E-46 & 2,14E-47 & 2,53E-42 & 3,46E-99 & 4,32E-203 & 4,46E-200
			& 1,86E-44 \\ 
			6,05E-208 & 9,07E-47 & 5,06E-48 & 6,23E-43 & 1,24E-100 & 4,19E-206 & 
			4,32E-203 & 4,52E-45 \\ 
			5,24E-211 & 2,17E-47 & 1,20E-48 & 1,53E-43 & 4,47E-102 & 4,06E-209 & 
			4,19E-206 & 1,10E-45 \\ 
			4,54E-214 & 5,21E-48 & 2,83E-49 & 3,78E-44 & 1,61E-103 & 3,94E-212 & 
			4,06E-209 & 2,67E-46 \\ 
			3,93E-217 & 1,25E-48 & 6,68E-50 & 9,32E-45 & 5,79E-105 & 3,82E-215 & 
			3,94E-212 & 6,49E-47%
		\end{tabular}%
	\end{center}
\end{table}

Table \ref{tab:conv} shows the convergence of several iteration methods. On
a logarithmic $y$-scale the error plot is a straight line, allowing a linear
curve fit by the exponential model 
\begin{equation}  \label{expmodel}
\varepsilon_n = qe^{\beta n}.
\end{equation}%
Taking the logarithm on both sides gives the linear model 
\begin{equation*}
\ln \varepsilon_n =\ln q+\beta n,
\end{equation*}%
where the parameters from the $\varepsilon _{n}, \; n=1,\dots ,N$ can be
calculated by solving the overdetermined system of linear equations 
\begin{equation*}
\begin{pmatrix}
1 & 1 \\ 
\vdots & \vdots \\ 
1 & N%
\end{pmatrix}%
\begin{pmatrix}
\ln q \\ 
\beta%
\end{pmatrix}%
=%
\begin{pmatrix}
\varepsilon _{1} \\ 
\vdots \\ 
\varepsilon _{N}%
\end{pmatrix}%
,
\end{equation*}%
by minimal squares. A variant of \eqref{expmodel} with arbitrary basis (e.g. 
$b=1/10$ for a decimal number system) is 
\begin{equation}  \label{expol}
\varepsilon =q(b^{n})^{r}.
\end{equation}%
The parameter $r$ in model \eqref{expol} can be deduced from model %
\eqref{expmodel} by 
$r=\beta /\ln b$. The basis $b=1/10$ gives the number of digits obtained by
one iteration; increasing the index by one corresponds to reducing the error
by the factor $(1/10)^{r}$.
In Table \ref{tab:convergence} the convergence parameters according to
several authors are compared.

\begin{figure}[tbp]
	\caption{Error reduction rates}
	\label{fig:convergence}\includegraphics[width=\textwidth]{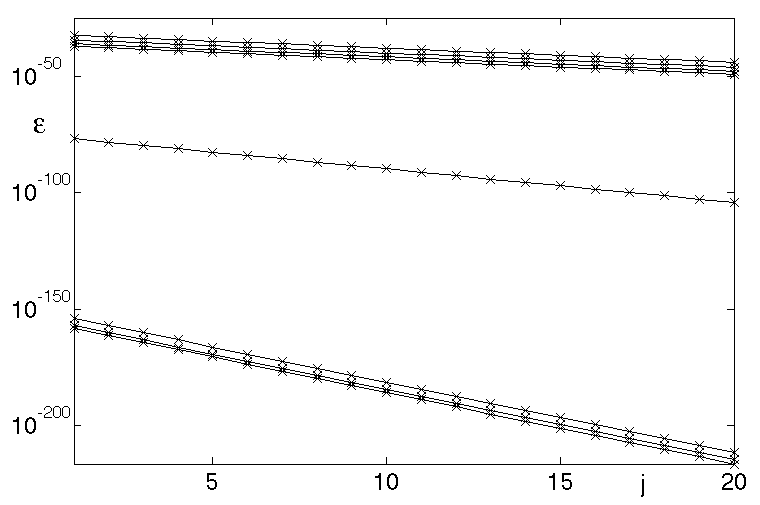}
\end{figure}

\begin{table}[htbp]
	\caption{Convergence parameters.}
	\label{tab:convergence}
	\begin{center}
		\begin{tabular}{cllllllll}
			& $\zeta^{SL}$ & $\zeta^{CS}$ & $\zeta^{Sr}$ & $\zeta^B$ & $\zeta^{PP08}$ & $%
			\zeta^{A}$ & $\zeta^{PP10}$ & $\zeta^{Sch}$ \\ \hline\hline
			$\ln q$ & -357.27 & -81.64 & -84.26 & -73.35 & -173.46 & -354.93 & -347.99 & 
			-78.00 \\ \hline
			$\beta$ & -7.05 & -1.43 & -1.45 & -1.40 & -3.33 & -6.94 & -6.94 & -1.42 \\ 
			\hline
			$q$ & 6.9e-156 & 3.5e-36 & 2.5e-37 & 1.4e-32 & 4.7e-76 & 7.2e-155 & 7.4e-152
			& 1.3e-34 \\ \hline
			$r \; (b=2)$ & 10.17 & 2.07 & 2.09 & 2.02 & 4.80 & 10.01 & 10.01 & 2.05 \\ 
			\hline
			$r \; (b=10)$ & 3.06 & 0.62 & 0.63 & 0.61 & 1.45 & 3.01 & 3.01 & 0.62%
		\end{tabular}%
	\end{center}
\end{table}
One can distinguish three groups of method that can be classified by their
convergence rate, namely Soria (2014), Amdeberhan (1996) and Pilehrood and
Pilehrood (2010) with a convergence rate of $r \approx 3$, Pilehrood and
Pilehrood (2008) with $r \approx 1.45$ and the others with $r \approx 0.6$.

The rate between two subsequent errors can be calculated from 
\begin{equation*}
\fracd{\varepsilon _{n+1}}{\varepsilon _{n}}=b^{r},
\end{equation*}%
as 
\begin{equation*}
r=\fracd{1}{\ln b} \ln \Big(\fracd{\varepsilon _{n+1}}{\varepsilon_{n}}\Big).
\end{equation*}

For the basis $b=2$ there is the general tendency that the rates decrease,
i.e. move towards the integer values (2,10), but move away from 5.


The methods
of Amdeberhan (1996) and Pilehrood and Pilehrood (2010) have exactly the
same error rate, which are only shifted by one index value. The reason is
that one is derived from the other such that both use the same generation
mechanism.

\subsection*{Acknowledgments}

The first author wishes to thank to Clavemat project, financed by the European
Union, and the University of Granma, where the paper was written. The second
author is supported by Conicyt (Chile) through Fondecyt project \#~1120587.

{\small \ 
}

\end{document}